\newtheorem{theorem}{Theorem}[section]
\newtheorem{lemma}[theorem]{Lemma}
\newtheorem{proposition}[theorem]{Proposition}
\theoremstyle{definition}}
\theoremstyle{definition}\newtheorem{example}[theorem]{Example}}
\theoremstyle{definition}}
\theoremstyle{definition}\newtheorem{remark}[theorem]{Remark}}
\newtheorem*{thma}{Theorem A}
\newtheorem*{exb}{Example B}
\newtheorem*{prc}{Proposition C}
\numberwithin{equation}{section}
\def\C{{\mathbb C}}
\def\N{{\mathbb N}}
\def\Z{{\mathbb Z}}
\def\R{{\mathbb R}}
\def\T{{\mathbb T}}
\def\K{{\mathbb K}}
\def\F{{\mathcal F}}
\def\uu{{\mathcal U}}
\def\epsilon{\varepsilon}
\def\kappa{\varkappa}
\def\phi{\varphi}
\def\leq{\leqslant}
\def\geq{\geqslant}
\def\dim{\hbox{\tt dim}\,}
\def\ker{\hbox{\tt ker}\,}
\def\spann{\hbox{\tt span}\,}
\title{On supercyclicity of operators from a supercyclic semigroup}
\author{Stanislav Shkarin}
\date{}
\begin{document}

\maketitle

\begin{abstract}
We show that for every supercyclic strongly continuous operator
semigroup $\{T_t\}_{t\geq 0}$ acting on a complex $\F$-space, every
$T_t$ with $t>0$ is supercyclic. Moreover, the set of supercyclic
vectors of each $T_t$ with $t>0$ is exactly the set of supercyclic
vectors of the entire semigroup.
\end{abstract}

\small \noindent{\bf MSC:} \ \ 47A16, 37A25

\noindent{\bf Keywords:} \ \ Hypercyclic semigroups, hypercyclic
operators, supercyclic operators, supercyclic semigroups \normalsize

\section{Introduction \label{s1}}\rm

Unless stated otherwise, all vector spaces in this article are over
the field $\K$, being either the field $\C$ of complex numbers or
the field $\R$ of real numbers and all topological spaces {\it are
assumed to be Hausdorff}. As usual, $\Z_+$ is the set of
non-negative integers, $\N$ is the set of positive integers and
$\R_+$ is the set of non-negative real numbers. The symbol $L(X)$
stands for the space of continuous linear operators on a topological
vector space $X$, while $X'$ is the space of continuous linear
functionals on $X$. As usual, for $T\in L(X)$, the dual operator
$T':X'\to X'$ is defined by the formula $T'f(x)=f(Tx)$ for $x\in X$
and $f\in X'$. Recall that an {\it affine} map $T$ on a vector space
$X$ is a map of the shape $Tx=u+Sx$, where $u$ is a fixed vector in
$X$ and $S:X\to X$ is linear. Clearly, $T$ is continuous if and only
if $S$ is continuous. The symbol $A(X)$ stands for the space of
continuous affine maps on a topological vector space $X$. An
$\F$-space is a complete metrizable topological vector space. Recall
that a family $\F=\{T_a\}_{a\in A}$ of continuous maps from a
topological space $X$ to a topological space $Y$ is called {\it
universal} if there is $x\in X$ for which $\{T_ax:a\in A\}$ is dense
in $Y$ and such an $x$ is called a {\it universal element} for $\F$.
We use the symbol $\uu({\cal F})$ for the set of universal elements
for $\cal F$. If $X$ is a topological space and $T:X\to X$ is a
continuous map, then we say that $x\in X$ is {\it universal} for $T$
if $x$ is universal for the family $\{T^n:n\in\Z_+\}$. We denote the
set of universal elements for $T$ by $\uu(T)$. A family
$\F=\{T_t\}_{t\in\R_+}$ of continuous maps from a topological space
$X$ to itself is called a {\it semigroup} if $T_0=I$ and
$T_{t+s}=T_tT_s$ for every $t,s\in\R_+$. We say that a semigroup
$\{T_t\}_{t\in\R_+}$ is {\it strongly continuous} if $t\mapsto T_t
x$ is continuous as a map from $\R_+$ to $X$ for every $x\in X$ and
we say that $\{T_t\}_{t\in\R_+}$ is {\it jointly continuous} if
$(t,x)\mapsto T_tx$ is continuous as a map from $\R_+\times X$ to
$X$. If $X$ is a topological vector space, we call a semigroup
$\{T_t\}_{t\in\R_+}$ a {\it linear semigroup} if $T_t\in L(X)$ for
every $t\in\R_+$ and $\{T_t\}_{t\in\R_+}$ is called an {\it affine
semigroup} if $T_t\in A(X)$ for every $t\in\R_+$. Recall that $T\in
L(X)$ is called {\it hypercyclic} if $\uu(T)\neq\varnothing$ and
elements of $\uu(T)$ are called {\it hypercyclic vectors}. A
universal linear semigroup $\{T_t\}_{t\in\R_+}$ is called {\it
hypercyclic} and its universal elements are called {\it hypercyclic
vectors} for $\{T_t\}_{t\in\R_+}$. If $T\in L(X)$, then universal
elements of the family $\{zT^nx:z\in\K,\ n\in\Z_+\}$ are called {\it
supercyclic vectors} for $T$ and $T$ is called {\it supercyclic} if
it has a supercyclic vector. Similarly, if $\{T_t\}_{t\in\R_+}$ is a
linear semigroup, then a universal element of the family
$\{zT_t:z\in\K,\ t\in\R_+\}$ is called a {\it supercyclic vector}
for $\{T_t\}_{t\in\R_+}$ and the semigroup is called {\it
supercyclic} if it has a supercyclic vector.

Hypercyclicity and supercyclicity have been intensely studied during
the last few decades, see \cite{bama-book} and references therein.
Our concern is the relation between the supercyclicity of a linear
semigroup and supercyclicity of the individual members of the
semigroup. The hypercyclicity version of the question was treated by
Conejero, M\"uller and Peris \cite{semi}, who proved that for every
strongly continuous hypercyclic linear semigroup
$\{T_t\}_{t\in\R_+}$ on an $\F$-space, each $T_s$ with $s>0$ is
hypercyclic and $\uu(T_s)=\uu(\{T_t\}_{t\in\R_+})$. Virtually the
same proof works in the following much more general setting
\cite[Chapter~3]{bama-book}.

\begin{thma}Let $\{T_t\}_{t\in\R_+}$ be a hypercyclic
jointly continuous linear semigroup on any topological vector space
$X$. Then each $T_s$ with $s>0$ is hypercyclic and
$\uu(T_s)=\uu(\{T_t\}_{t\in\R_+})$.
\end{thma}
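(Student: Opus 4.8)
The plan is to prove the two inclusions separately. The inclusion $\uu(T_s)\subseteq\uu(\{T_t\}_{t\in\R_+})$ is immediate: the $T_s$-orbit $\{T_s^nx:n\in\Z_+\}=\{T_{ns}x:n\in\Z_+\}$ of any $x$ is contained in its semigroup-orbit $\{T_tx:t\in\R_+\}$, so a dense set stays dense. Thus all the content (including the hypercyclicity of $T_s$) lies in the reverse inclusion $\uu(\{T_t\}_{t\in\R_+})\subseteq\uu(T_s)$. Replacing $\{T_t\}_{t\in\R_+}$ by the rescaled semigroup $\{T_{st}\}_{t\in\R_+}$ (still jointly continuous, with the same orbits) we may assume $s=1$. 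So fix $x\in\uu(\{T_t\}_{t\in\R_+})$; we must show that for every nonempty open $U\subseteq X$ the ``hitting set'' $N_U:=\{t\ge 0:T_tx\in U\}$ contains a nonnegative integer.

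I would first record two structural facts. \emph{(a) Tails of the orbit stay dense.} A finite-dimensional Hausdorff topological vector space carries no hypercyclic linear semigroup (orbits $t\mapsto e^{tA}v$ are too thin to be dense: a suitable one-dimensional quotient of such an orbit is a logarithmic spiral), so $X$ is infinite-dimensional, hence not locally compact; consequently every compact subset of $X$ is nowhere dense. For any $t_0\ge 0$ the arc $\{T_tx:0\le t\le t_0\}$ is a continuous image of $[0,t_0]$, hence compact and nowhere dense, so since $\{T_tx:t\ge 0\}$ is dense the tail $\{T_tx:t\ge t_0\}$ is dense too. In particular $N_U$ is unbounded, and every $T_tx$ is again a universal vector of the semigroup. \emph{(b) Equicontinuity and a ``lingering'' lemma.} Joint continuity of $(t,y)\mapsto T_ty$ together with compactness of $[0,1]$ gives that $\{T_r:0\le r\le 1\}$ is equicontinuous; combined with linearity, if $y_k\to y$ then $T_ry_k\to T_ry$ uniformly in $r\in[0,1]$. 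Hence: for every nonempty open $U$ there are a nonempty open $U'\subseteq U$ and $\delta=\delta(U)\in(0,1]$ with $T_tx\in U'\Rightarrow T_{t'}x\in U$ for all $t'\in[t,t+\delta]$. (Choose $u\in U$, a balanced $0$-neighbourhood $W$ with $u+W+W\subseteq U$, by equicontinuity a balanced $0$-neighbourhood $V\subseteq W$ with $T_rV\subseteq W$ for $r\in[0,1]$, set $U':=u+V$, and take $\delta$ so small that $T_ru\in u+W$ for $r\in[0,\delta]$, which is possible by strong continuity of $r\mapsto T_ru$.) Since the orbit meets $U'$ at arbitrarily large times by (a), $N_U$ contains closed intervals of length $\ge\delta(U)$ arbitrarily far out.

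The heart of the matter is to upgrade these intervals: \emph{if $N_U$ ever contains an interval of length $\ge 1$ it contains an integer, and we are done.} One way to organize this is through the closed, $T_1$-invariant set $F:=\overline{\{T_1^nx:n\in\Z_+\}}$, which equals $X$ exactly when $x\in\uu(T_1)$. The set $\Lambda:=\{t\ge 0:T_tx\in F\}$ is closed, contains $\Z_+$, and is a sub-semigroup of $(\R_+,+)$: if $a,r\in\Lambda$, pick $n_\nu$ with $T_1^{n_\nu}x\to T_ax$; then $T_1^{n_\nu}(T_rx)=T_r(T_1^{n_\nu}x)\to T_{a+r}x$, and each $T_1^{n_\nu}(T_rx)$ lies in $F$ because $T_rx\in F$ and $F$ is $T_1$-invariant, so $T_{a+r}x\in F$. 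If $\Lambda$ has nonempty interior, say $(a,b)\subseteq\Lambda$, then $(ka,kb)\subseteq\Lambda$ for all $k\ge 1$ and these intervals eventually overlap, so $\Lambda\supseteq(K,\infty)$ for some $K$; by (a) the tail $\{T_tx:t>K\}$ is dense and contained in $F$, whence $F=X$ and $\Lambda=\R_+$. If instead $\Lambda$ contains an irrational $\theta$, then $\Lambda\supseteq\{m+n\theta:m,n\in\Z_+\}$ is dense, and again $\Lambda=\R_+$. So the only remaining possibility is that $\Lambda$ is a proper, discrete sub-semigroup of $\R_+$ — equivalently, $T_rx\notin F$ for all $r$ in some interval $(0,\mu)$, while $\{T_1^nx:n\in\Z_+\}$ is dense only in the nowhere-dense set $F$.

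Excluding this last case is the main obstacle, and it is exactly where the equicontinuity from (b) must be used in an essential, Conejero--M\"uller--Peris-style way. The whole orbit $\{T_tx:t\ge 0\}=\bigcup_{n\ge 0}T_1^n(\gamma)$, where $\gamma=\{T_rx:0\le r\le 1\}$ is the compact ``period arc'' from $x$ to $T_1x$, is dense, and $T_1^n(\gamma)$ is that arc translated to the node $T_1^nx$. The point is that, by equicontinuity and linearity, $T_1^n(\gamma)$ depends \emph{equicontinuously} on the node: if $T_1^nx$ is close to $T_1^mx$, then $T_1^n(\gamma)$ and $T_1^m(\gamma)$ are uniformly close. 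I would use this to convert density of the ``chain of arcs'' $\bigcup_nT_1^n(\gamma)$ into density of the ``chain of nodes'' $\{T_1^nx\}$ — equivalently, to amplify the interval-lengths from (b) past $1$ by iterating the lingering lemma, feeding the interval produced for $U'$ into the construction for $U$ — which contradicts $F\neq X$. Granting this discretization step, $\Lambda=\R_+$, so $F=X$, i.e. $x\in\uu(T_1)$; together with the trivial inclusion this gives $\uu(T_1)=\uu(\{T_t\}_{t\in\R_+})$, and in particular $T_1$ is hypercyclic. I expect the discretization step — making equicontinuity do the work of transferring continuous-time density to discrete-time density — to be the only genuinely hard point.
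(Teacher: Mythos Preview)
Your proposal is explicitly incomplete: you isolate the ``discretization step'' as the heart of the matter and then grant it rather than prove it. Everything up to that point is correct and well organized --- the trivial inclusion, the reduction to $s=1$, nowhere-density of arcs and the ensuing infinite-dimensionality of $X$, the equicontinuity/lingering lemma, and the closed additive sub-semigroup $\Lambda=\{t\ge 0:T_tx\in F\}$ --- but the residual case (ruling out $F=\overline{\{T_1^nx:n\ge 0\}}\neq X$) is the entire content of the theorem. Moreover, the sketch you give for it does not work as written: iterating the lingering lemma produces a chain $U\supset U'\supset U''\supset\cdots$ and a sum $\delta(U)+\delta(U')+\delta(U'')+\cdots$ of lingering times, and nothing prevents this sum from converging below $1$, since each $\delta$ depends on the size of the corresponding open set and on the modulus of continuity of $r\mapsto T_ru$ at the moving centre $u$. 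The equicontinuous dependence of the arcs $T_1^n(\gamma)$ on the nodes $T_1^nx$ is genuine, but converting it into density of the node set requires a further idea your outline does not supply.

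The paper does not reprove Theorem~A directly; it cites the Conejero--M\"uller--Peris argument and abstracts it as Proposition~\ref{gege}, whose condition~(2) is exactly what replaces your missing step: for each universal $x$ one exhibits a connected, locally path connected, \emph{simply connected} set $Y_{c,x}$ with $\{T_tx:0\le t\le c\}\subseteq Y_{c,x}\subseteq\uu(\{T_t\}_{t\in\R_+})$. In the linear hypercyclic case $Y_{c,x}$ is built from nonzero linear combinations $\sum z_jT_{t_j}x$, which remain universal because every nonzero $\sum z_jT_{t_j}$ has dense range for a hypercyclic strongly continuous semigroup; the resulting set is an infinite-dimensional linear space with a point removed, hence simply connected. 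The covering/homotopy argument encapsulated in Proposition~\ref{gege} then does the work that your interval-amplification cannot. Your $\Lambda$-analysis is a different and appealing viewpoint, but as presented it does not circumvent the need for this topological input.
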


The stronger condition of joint continuity coincides with the strong
continuity in the case when $X$ is an $\F$-space due to a
straightforward application of the Banach--Steinhaus theorem. The
essential part of the proofs in \cite{semi,bama-book} does not
really need linearity. It is based on a homotopy-type argument and
goes through without any changes (under certain assumptions) for
semigroups of non-linear maps. Recall that a topological space $X$
is called {\it connected} if it has no subsets different from
$\varnothing$ and $X$, which are closed and open and it is called
{\it simply connected} if for any continuous map $f:\T\to X$, there
is a continuous map $F:\T\times [0,1]\to X$ and $x_0\in X$ such that
$F(z,0)=f(z)$ and $F(z,1)=x_0$ for any $z\in\T$. Next, $X$ is called
{\it locally path connected at} $x\in X$  if for any neighborhood
$U$ of $x$, there is a neighborhood $V$ of $x$ such that for any
$y\in V$, there is a continuous map $f:[0,1]\to X$ satisfying
$f(0)=x$, $f(1)=y$ and $f([0,1])\subseteq U$. A space $X$ is called
{\it locally path connected} if it is locally path connected at
every point. Just listing the conditions needed for the proof in
\cite{semi,bama-book} to run smoothly, we get the following result.

\begin{proposition}\label{gege} Let $X$ be a topological space and
$\{T_t\}_{t\in\R_+}$ be a jointly continuous semigroup on $X$ such
that
\begin{itemize} \itemsep=-2pt
\item[\rm(1)] $\{T_tu:t\in[0,c]\}$ is nowhere dense in $X$ for every $c>0$ and
$u\in X;$
\item[\rm(2)] for every $c>0$ and $x\in\uu(\{T_t\}_{t\in\R_+})$,
there is $Y_{c,x}\subseteq X$ such that $Y_{c,x}$ is connected,
locally path connected, simply connected and
$\{T_tx:t\in[0,c]\}\subseteq Y_{c,x}\subseteq
\uu(\{T_t\}_{t\in\R_+})$.
\end{itemize}
Then $\uu(T_s)=\uu(\{T_t\}_{t\in\R_+})$ for every $s>0$.
\end{proposition}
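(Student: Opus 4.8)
The plan is to show the two inclusions $\uu(\{T_t\})\subseteq\uu(T_s)$ and $\uu(T_s)\subseteq\uu(\{T_t\})$ for fixed $s>0$. The second is trivial: the orbit $\{T^n_s x:n\in\Z_+\}$ is a subfamily of $\{T_tx:t\in\R_+\}$, so any vector dense under the former is dense under the latter. The whole content is the first inclusion, and the strategy is the standard homotopy argument of Conejero--M\"uller--Peris: fix $x\in\uu(\{T_t\})$; I want to show $\{T^n_sx:n\in\Z_+\}$ is dense in $X$. Using joint continuity, cover $[0,s]$ and write an arbitrary time $t\geq 0$ as $t=ns+r$ with $n\in\Z_+$ and $r\in[0,s]$, so $T_tx=T^n_s(T_rx)$; thus the full orbit of $x$ is contained in $\bigcup_{n\geq 0}T^n_s(K)$ where $K=\{T_rx:r\in[0,s]\}$ is the compact ``first piece'' of the trajectory. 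Since this union is dense, it suffices to approximate points of $K$ by points of the $T_s$-orbit, i.e. to show that the orbit $\{T^n_sx\}$ is dense in $K$ — equivalently, that $x\in\overline{\{T^n_sx:n\in\Z_+\}}$, from which denseness of the $T_s$-orbit in all of $X$ follows by applying powers of $T_s$ and continuity.

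For this one fixes a neighborhood $U$ of $x$; one must produce $n\geq 1$ with $T^n_sx\in U$. Here the homotopy input enters. By hypothesis (2) with $c=s$ there is a set $Y=Y_{s,x}$ that is connected, locally path connected and simply connected with $K=\{T_rx:r\in[0,s]\}\subseteq Y\subseteq\uu(\{T_t\})$. Consider the continuous map $\gamma:[0,s]\to Y$, $\gamma(r)=T_rx$; since $\gamma(0)=x=\gamma(s)\cdot$(no — rather $\gamma(s)=T_sx$), I instead loop through the full period: by strong/joint continuity the map $r\mapsto T_rx$ is continuous, and for the argument one builds, for each point $y\in K$, a path in $Y\subseteq\uu(\{T_t\})$ from $x$ to $y$ using local path connectedness, then transports it forward by $T_s$; simple connectedness of $Y$ guarantees that the winding obstruction which would otherwise prevent closing up the homotopy vanishes. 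Concretely: the loop $c(r)=T_{rs}x$ for $r\in[0,1]$ need not close, but $T_s\circ c$ and $c$ share endpoints after reparametrisation $T_s(T_{rs}x)=T_{(r+1)s}x$, and stringing these together over $n$ periods, simple connectedness lets one contract the resulting loop, which forces some $T^n_sx$ to return into $U$.

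The main obstacle — and the step that genuinely uses all three topological hypotheses on $Y_{c,x}$ — is this homotopy/winding argument showing $x\in\overline{\{T^n_sx:n\geq 1\}}$. Connectedness and local path connectedness are needed to join nearby universal vectors by paths staying inside the universal set; simple connectedness kills the degree-one obstruction so that a loop traced out by the semigroup flow over one period can be deformed to a point within $\uu(\{T_t\})$, and the deformation, evaluated appropriately, yields the required return of the discrete orbit near $x$. Hypothesis (1), that short pieces of trajectories $\{T_tu:t\in[0,c]\}$ are nowhere dense, is used to rule out the degenerate situation where the whole space is swept out by a single compact arc (which would make denseness vacuous or the reduction to $K$ meaningless), and more precisely to ensure that the approximation of a point of $X$ requires infinitely many distinct powers $T^n_s$ rather than being achieved trivially. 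Once $x\in\overline{\{T^n_sx\}}$ is established, one concludes: given any open $W\subseteq X$, pick $t$ with $T_tx\in W$, write $t=ns+r$, so $T^n_s(T_rx)\in W$; approximating $T_rx$ by $T^m_sx$ and applying the continuous map $T^n_s$ shows $T^{n+m}_sx$ can be pushed into $W$, so $\{T^k_sx\}$ is dense and $x\in\uu(T_s)$.
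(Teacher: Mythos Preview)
The paper does not give its own proof of this proposition; it explicitly presents it as an abstraction of the Conejero--M\"uller--Peris argument and refers to \cite{semi} and \cite[Chapter~3]{bama-book} for the details. Your plan is the same approach, so at the level of strategy you are aligned with the paper.

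There is, however, a genuine gap in your reduction. You claim it suffices to prove $x\in\overline{\{T_s^nx:n\geq1\}}$, and then in the final paragraph you ``approximate $T_rx$ by $T_s^mx$'' to conclude density in $X$. But $x\in\overline{\{T_s^nx\}}$ only lets you approximate $x$ by $T_s^mx$; pushing forward by the continuous map $T_r$ gives $T_rx\approx T_rT_s^mx=T_{ms+r}x$, which is \emph{not} of the form $T_s^kx$ when $r\in(0,s)$. So recurrence of $x$ under $T_s$ does not by itself yield density of the discrete orbit in $K=\{T_rx:r\in[0,s]\}$, and your last paragraph does not go through. (A rational rotation on the circle already shows that $x\in\overline{\{T_s^nx:n\geq1\}}$ can hold with the $T_s$-orbit finite.)

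The repair is simple and is exactly what the cited references do: do not pass through the intermediate recurrence statement at all. Fix an \emph{arbitrary} nonempty open $U\subseteq X$ from the outset and run the homotopy/lifting argument in $Y=Y_{s,x}$ to produce $n$ with $T_s^nx\in U$ directly. Condition~(1) is used here not merely to ``rule out degeneracies'' but in a precise way: since $\{T_ty:t\in[0,c]\}$ is nowhere dense, every universal $y$ has $\{t>0:T_ty\in U\}$ unbounded, which is what allows the local hitting-time data along the path $r\mapsto T_rx$ to be patched (via the lifting property of the simply connected $Y$ over $\R\to\R/s\Z$) into an integer multiple of $s$. Your description of this step (``stringing together over $n$ periods \dots\ simple connectedness lets one contract the resulting loop'') is in the right spirit but is not yet a proof; since the paper itself defers to \cite{semi,bama-book} here, you should either do the same or write out the covering-space argument precisely.
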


The natural question whether the supercyclicity version of Theorem~A
holds was touched by Bernal-Gonz\'alez and Grosse-Erdmann in
\cite{ex2}. They have produced the following example.

\begin{exb} Let $X$ be a Banach space over $\R$,
$\{T_t\}_{t\in\R_+}$ be a hypercyclic linear semigroup on $X$ and
$A_t\in L(\R^2)$ for $t\in\R_+$ be the linear operator with the
matrix
$A=\Bigl(\begin{matrix}\scriptstyle\cos t&\scriptstyle\sin t\\
\noalign{\kern-3pt} \scriptstyle-\sin t&\scriptstyle\cos
t\end{matrix}\Bigr)$. Then $\{A_t\oplus T_t\}_{t\in\R_+}$ is a
supercyclic linear semigroup on $\R^2\times X$, while $A_t\oplus
T_t$ is non-supercyclic whenever $\frac{t}{\pi}$ is rational.
\end{exb}

Example~B shows that the natural supercyclicity version of Theorem~A
fails in the case $\K=\R$. In the complex case, the following
partial result was obtained by Bayart and Matheron
\cite[p.~73]{bama-book}.

\begin{prc}Let $X$ be a complex topological vector space and
$\{T_t\}_{t\in\R_+}$ be a supercyclic jointly  continuous linear
semigroup on $X$ such that $T_t-\lambda I$ has dense range for every
$t>0$ and every $\lambda\in\C$. Then each $T_t$ with $t>0$ is
supercyclic. Moreover, the set of supercyclic vectors for $T_t$ does
not depend on the choice of $t>0$ and coincides with the set of
supercyclic vectors of the entire semigroup.
\end{prc}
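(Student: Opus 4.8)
The plan is to realise supercyclicity as ordinary universality of an induced $\R_+$-semigroup acting on the projective space of $X$, and then to invoke Proposition~\ref{gege}. First note that $X\ne\{0\}$ (otherwise there is nothing to prove) and that $X$ is automatically infinite-dimensional: in finite complex dimension some $T_t$, $t>0$, would have an eigenvalue $\lambda$, and then $T_t-\lambda I$, being non-injective, would fail to have dense range. Let $\PP=\PP(X)$ be the quotient of $X\setminus\{0\}$ by the relation identifying $y$ with $zy$ for $z\in\C\setminus\{0\}$, let $q\colon X\setminus\{0\}\to\PP$, $q(y)=[y]$, be the (continuous and open) quotient map, and observe that, $X$ being infinite-dimensional and hence not locally compact, $\PP$ too is not locally compact; in particular every compact subset of $\PP$ is nowhere dense — the fact that will power the verification of Proposition~\ref{gege}(1) and several density statements below.

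The one step that uses the hypothesis on $T_t-\lambda I$ is the following. If $x$ is supercyclic for $\{T_t\}_{t\in\R_+}$ and $T_{t_1}x=0$ for some $t_1>0$, then $T_tx=0$ for every $t\geq t_1$, so $T_{t_1}$ carries the dense set $\{zT_tx:z\in\C,\ t\geq0\}$ into $\{0\}$ and is therefore $0$, contradicting the density of its range. Hence $T_{\sigma+t}x\ne0$ for all $\sigma,t\geq0$, so $[x]$ and every $[T_tx]$ lie in $\Omega:=\{[y]\in\PP:T_ty\ne0\text{ for all }t\geq0\}$; one checks likewise that the supercyclic vectors of $\{T_t\}$ are dense in $X$, so $\Omega$ is dense in $\PP$. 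On $\Omega$ the formulas $\hat T_t[y]=[T_ty]$ define a jointly continuous $\R_+$-semigroup $\{\hat T_t\}_{t\in\R_+}$ of continuous self-maps of $\Omega$ (well-definedness is clear, and joint continuity follows from that of $(t,y)\mapsto T_ty$ together with the continuity and openness of $q$), and, using that $q$ is open and all vectors in sight are non-zero, one obtains the dictionary: for $[y]\in\Omega$, one has $[y]\in\uu(\hat T_s)$ iff $y$ is supercyclic for $T_s$, and $[y]\in\uu(\{\hat T_t\}_{t\in\R_+})$ iff $y$ is supercyclic for $\{T_t\}_{t\in\R_+}$.

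Next I would verify the two hypotheses of Proposition~\ref{gege} for $(\Omega,\{\hat T_t\}_{t\in\R_+})$. Condition~(1) holds because an orbit segment $\{\hat T_t[u]:t\in[0,c]\}$ is a continuous image of $[0,c]$, hence a compact — hence nowhere dense — subset of $\Omega$. For condition~(2), given a supercyclic vector $x$ of the semigroup and $c>0$, I would take $Y_{c,[x]}=\{[T_tx]:t\in[0,c]\}$. That $Y_{c,[x]}\subseteq\uu(\{\hat T_t\}_{t\in\R_+})$ follows since each $T_tx$ is itself supercyclic for $\{T_t\}$: the portion $\{zT_rx:z\in\C,\ r\in[0,t)\}$ discarded from the orbit of $x$ has relatively compact, hence nowhere dense, image in $\PP$, so deleting it from the dense projective orbit of $x$ leaves a dense set. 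The decisive point is that $t\mapsto[T_tx]$ is injective on $\R_+$: if $[T_tx]=[T_{t'}x]$ with $0\leq t<t'$, then $T_{t'-t}(T_tx)=z\,T_tx$ for some $z\ne0$, and iterating this identity gives $[T_\tau x]=[T_{t+\rho}x]$ whenever $\tau=t+m(t'-t)+\rho$ with $m\geq0$ and $\rho\in[0,t'-t)$; thus the whole projective orbit of $x$ lies in the compact set $\{[T_\sigma x]:\sigma\in[0,t']\}$, contradicting its density in $\PP$. Being a continuous injective image of a compact interval in a Hausdorff space, $Y_{c,[x]}$ is therefore an arc, so connected, locally path connected and simply connected. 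Proposition~\ref{gege} now yields $\uu(\hat T_s)=\uu(\{\hat T_t\}_{t\in\R_+})$ for every $s>0$; translating back via the dictionary — and noting that a vector $y$ with $[y]\notin\Omega$, or $y=0$, is supercyclic neither for $T_s$ nor for $\{T_t\}$, since a supercyclic vector of $T_s$ is automatically supercyclic for the semigroup — I conclude that for every $s>0$ the supercyclic vectors of $T_s$ are exactly those of $\{T_t\}_{t\in\R_+}$; in particular, as the semigroup is supercyclic, each $T_s$ with $s>0$ is supercyclic.

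The main obstacle I anticipate is precisely the injectivity of the orbit curve $t\mapsto[T_tx]$ in condition~(2): it is what forces $Y_{c,[x]}$ to be an arc, hence simply connected, rather than, say, a loop, and it relies essentially on the connectedness of $\C\setminus\{0\}$ together with the infinite-dimensionality of $X$ — precisely the ingredients absent in Example~B, where the rotational summand turns the analogous curve into a circle in real projective space. Apart from this, the only delicate bookkeeping is the point-set topology of $\PP$ (Hausdorffness, failure of local compactness, openness of $q$), which is routine for Banach or Fr\'echet spaces but deserves a little extra care for general topological vector spaces.
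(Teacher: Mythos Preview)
Your approach is exactly what the paper itself suggests in the remark following Proposition~C: pass to the induced action on (a subset of) the projective space and invoke Proposition~\ref{gege}, the density hypothesis on $T_t-\lambda I$ being precisely what makes the projective orbit curve $t\mapsto[T_tx]$ injective so that $Y_{c,[x]}$ is an arc. One small caveat: the implication ``$\PP$ not locally compact $\Rightarrow$ every compact subset of $\PP$ is nowhere dense'' is not valid for general Hausdorff spaces, but you only ever apply it to sets of the form $q(K)$ with $K\subset X\setminus\{0\}$ compact, and for these it follows directly from Lemma~\ref{tv1}.
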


The argument in \cite{bama-book} is another adaptation of the proof
in \cite{semi}, however one can obtain the same result directly by
considering the induced action on subsets of the projective space
and applying Proposition~\ref{gege}. We will show that in the case
$\K=\C$, the supercyclicity version of Theorem~A holds without any
additional assumptions.

\begin{theorem}\label{main} Let $X$ be a complex topological vector
space and $\{T_t\}_{t\in\R_+}$ be a supercyclic jointly continuous
linear semigroup on $X$. Then each $T_s$ with $s>0$ is supercyclic
and the set of supercyclic vectors of $T_s$ coincides with the set
of supercyclic vectors of $\{T_t\}_{t\in\R_+}$.
\end{theorem}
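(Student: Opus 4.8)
The plan is to lift the problem to the projective space of $X$ and apply Proposition~\ref{gege}. One inclusion is automatic: if $x$ is supercyclic for $T_s$ with $s>0$, then $\{zT_s^nx:z\in\C,\,n\in\Z_+\}=\{zT_{ns}x:z\in\C,\,n\in\Z_+\}$ is contained in $\{zT_tx:z\in\C,\,t\in\R_+\}$, so $x$ is supercyclic for $\{T_t\}$. For the reverse inclusion, note first that if $\dim X\leq1$ the statement is trivial, while if $2\leq\dim X<\infty$ there is no supercyclic semigroup on $X$ at all: the orbit $t\mapsto T_tx=e^{tA}x$ is real analytic, so its image in the complex projective space $P$ of $X$ is a countable union of analytic arcs together with finitely many limit points, a compact meagre set, hence nowhere dense in $P$ (which has positive complex dimension), hence not dense. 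So we may assume $\dim_\C X=\infty$.

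Let $\pi\colon X\setminus\{0\}\to P$ be the canonical (open) quotient map. A vector $x\neq0$ is supercyclic for $T\in L(X)$ exactly when $\pi(x)$ is universal for the induced map $\widehat T$, and similarly for semigroups. If $x$ is supercyclic for $\{T_t\}$, then $T_tx\neq0$ for every $t\geq0$: otherwise the entire orbit of $x$ would sit inside $\pi(\ker T_\tau)$ for a suitable $\tau$, a proper projective subspace of $P$ and hence nowhere dense. Thus the $\widehat{T_t}$ restrict to a jointly continuous semigroup on the invariant set $Z=\{[v]\in P:T_tv\neq0\text{ for all }t\geq0\}$; the continuous orbit of any supercyclic vector lies in $Z$ and is dense in $P$, so $Z$ is dense in $P$, and the theorem reduces to proving $\uu(\widehat{T_s}|_Z)=\uu(\{\widehat{T_t}|_Z\})$ for $s>0$. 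Since $X$ is infinite dimensional it is nowhere locally compact, and an affine-slice argument carries this over to $P$, and then (as $Z$ is dense in $P$) to $Z$; hence every compact subset of $Z$ is nowhere dense in $Z$. In particular $\{[T_tu]:t\in[0,c]\}$, being a continuous image of $[0,c]$, is compact and therefore nowhere dense in $Z$ — this is hypothesis~(1) of Proposition~\ref{gege}.

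The core of the argument is hypothesis~(2). Fix $c>0$ and a supercyclic vector $x$, and put $Y_{c,x}=\{[T_tx]:t\in[0,c]\}$. The crucial claim is that the orbit map $t\mapsto[T_tx]$ is injective: if $[T_tx]=[T_sx]$ with $t>s$, write $T_tx=\mu T_sx$ with $\mu\in\C^{\ast}$; then $w=T_{t-s}x-\mu x$ is annihilated by all $T_p$ with $p\geq s$, and a short computation produces a nonzero finite combination $v^{\ast}$ of the vectors $T_{j(t-s)}x$ with $T_{t-s}v^{\ast}=\mu v^{\ast}$, so that past a finite time the orbit of $x$ agrees projectively with the (projectively periodic) orbit of $v^{\ast}$; hence $\{[T_rx]:r\geq0\}$ lies in a compact subset of $P$, contradicting its density since $P$ is not compact. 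Therefore $Y_{c,x}$ is the injective continuous image of the compact interval $[0,c]$ in the Hausdorff space $Z$, so it is homeomorphic to $[0,c]$, and in particular connected, locally path connected and simply connected. Finally $Y_{c,x}\subseteq\uu(\{\widehat{T_t}|_Z\})$: for $t\in[0,c]$ the projective orbit of $[T_tx]$ is $\{[T_rx]:r\geq t\}$, which by injectivity is obtained from the dense set $\{[T_rx]:r\geq0\}$ by deleting the compact (hence nowhere dense) set $\{[T_rx]:r\in[0,t]\}$, so it remains dense in $P$; that is, $T_tx$ is again supercyclic for $\{T_t\}$. Proposition~\ref{gege} now applies and yields the theorem.

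I expect hypothesis~(2) — and within it the injectivity of the orbit map, which is what turns the orbit segment into an arc and hence makes it simply connected — to be the main obstacle. This is also the step where the hypothesis $\K=\C$ is essential: over $\R$ the statement is false (Example~B), the reason being that $\R^{\ast}$, unlike $\C^{\ast}$, is disconnected, so that passing to the projective space no longer eliminates the ``rotational'' obstruction.
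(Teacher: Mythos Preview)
Your approach is correct and takes a genuinely different route from the paper's. The paper proceeds via the dichotomy of Proposition~\ref{dich}: either every $T_t-\lambda I$ has dense range (and then Proposition~C applies), or there is an invariant closed hyperplane $H$, in which case the problem is reduced, via Lemmas~\ref{dich1} and~\ref{trar1}, to the universality of an induced \emph{affine} semigroup on $H$, and Theorem~\ref{BBB} is invoked; the proof of Theorem~\ref{BBB} then applies Proposition~\ref{gege} with $Y_{c,x}$ taken to be the affine hull $\Lambda(x)$ of the orbit. You instead work directly on the projective quotient and apply Proposition~\ref{gege} there, choosing $Y_{c,x}$ to be the orbit segment itself. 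The crucial point making this work is your observation that the projective orbit map $t\mapsto[T_tx]$ is injective for a supercyclic $x$: once $[T_bx]=[T_ax]$ with $b>a$, applying $T_r$ gives $[T_{s+(b-a)}x]=[T_sx]$ for all $s\geq a$, so the entire projective orbit equals the compact arc $\{[T_tx]:0\leq t\leq b\}$, which is nowhere dense --- your write-up of this step via the auxiliary eigenvector $v^\ast$ is a little roundabout but reaches the same conclusion. Your argument is shorter and avoids both the dichotomy and the detour through affine semigroups; the paper's route, on the other hand, yields Theorem~\ref{BBB} on affine semigroups as a result of independent interest. Two small points to tidy up: your sketch for the case $2\leq\dim X<\infty$ is not quite an argument (the closure of a real-analytic curve in $\PP^{n-1}$ is not obviously meagre from what you wrote), but this is exactly Lemma~\ref{fd}, which you can cite; and in the step ``$T_tx\neq0$ for all $t$'' you should add a word for the degenerate case $T_\tau=0$ (handled as in the first paragraph of the proof of Lemma~\ref{iden}).
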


It turns out that any supercyclic jointly continuous linear
semigroup on a complex topological vector $X$ either satisfies
conditions of Proposition~C or has a closed invariant hyperplane
$Y$. In the latter case the issue reduces to the following
generalization of Theorem~A to affine semigroups.

\begin{theorem}\label{BBB} Let $X$ be a topological vector space
and $\{T_t\}_{t\in\R_+}$ be a universal jointly continuous affine
semigroup on $X$. Then each $T_s$ with $s>0$ is universal and
$\uu(T_s)=\uu(\{T_t\}_{t\in\R_+})$.
\end{theorem}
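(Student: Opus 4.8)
The plan is to derive Theorem~\ref{BBB} from Proposition~\ref{gege}: it suffices to check that a universal jointly continuous affine semigroup $\{T_t\}_{t\in\R_+}$ on $X$ satisfies conditions~(1) and~(2) of that proposition, since then $\uu(T_s)=\uu(\{T_t\}_{t\in\R_+})$ for every $s>0$, and each such $T_s$ is universal because $\uu(\{T_t\}_{t\in\R_+})\neq\varnothing$. Before applying the proposition I would dispose of the low-dimensional cases: if $X=\{0\}$ the theorem is trivial, while if $0<\dim X<\infty$ there is no universal jointly continuous affine semigroup on $X$ at all, since it is easy to see, after splitting $X$ according to the spectrum of the generator of the linear part, that the forward orbit of every point is contained in a proper closed subset of $X$ (this is the classical fact that finite-dimensional spaces carry no hypercyclic $C_0$-semigroups, together with a translation argument for the affine part). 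So we may assume $\dim X=\infty$.

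Condition~(1) is then essentially automatic: for $c>0$ and $u\in X$ the set $\{T_tu:t\in[0,c]\}$ is the continuous image of the compact interval $[0,c]$ under $t\mapsto T_tu$, hence compact; and in an infinite-dimensional topological vector space every compact set has empty interior, for otherwise $0$ would have a compact neighborhood, so $X$ would be locally compact and hence finite-dimensional. Thus $\{T_tu:t\in[0,c]\}$ is closed and nowhere dense.

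For condition~(2) the crucial point is the choice of $Y_{c,x}$. One is tempted by the full orbit $\{T_tx:t\in\R_+\}$, which is connected and, as we shall see, contained in $\uu(\{T_t\}_{t\in\R_+})$; but a dense orbit need not be locally path connected in its induced topology, so this is the wrong choice. Instead, for $x\in\uu(\{T_t\}_{t\in\R_+})$ and $c>0$ I would take $Y_{c,x}=\{T_tx:t\in[0,c]\}$ itself. First, $Y_{c,x}\subseteq\uu(\{T_t\}_{t\in\R_+})$: for any $r\geq0$ the orbit of $T_rx$ equals $\{T_ux:u\geq r\}$, and given a nonempty open $V\subseteq X$ the set $V\setminus\overline{\{T_ux:u\in[0,r+1]\}}$ is nonempty and open by condition~(1), hence meets the dense orbit of $x$ at some $T_{u_1}x$, where necessarily $u_1>r+1$; so $\{T_ux:u\geq r\}$ is dense and $T_rx$ is universal. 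Second, $Y_{c,x}$ is connected, locally path connected and simply connected: the orbit map $\gamma\colon[0,c]\to X$, $\gamma(t)=T_tx$, is continuous, and it is injective because $T_ax=T_bx$ with $0\leq a<b\leq c$ would force $T_{a+u}x=T_{b+u}x$ for all $u\geq0$ and hence the orbit of $x$ to equal the compact set $\{T_tx:t\in[0,b]\}$, which by density would equal $X$, impossible since $\dim X=\infty$. Being a continuous injection of a compact space into a Hausdorff space, $\gamma$ is a homeomorphism of $[0,c]$ onto $Y_{c,x}$, and $[0,c]$ is connected, locally path connected and contractible, hence so is $Y_{c,x}$.

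With both conditions verified, Proposition~\ref{gege} completes the proof. The only step that requires any thought is condition~(2), and its content is precisely that replacing the possibly pathological full orbit by an orbit arc over a compact parameter interval turns it into a genuine interval, for which connectedness, local path connectedness and simple connectedness are automatic; I expect this to be the main obstacle, and it is a mild one.
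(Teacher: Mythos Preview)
Your argument is correct and takes a genuinely different route from the paper for condition~(2) of Proposition~\ref{gege}. The paper chooses $Y_{c,x}=\Lambda(x)$, the affine span of the orbit defined in~(\ref{LAM}); being an affine subspace, $\Lambda(x)$ is automatically connected, locally path connected and simply connected, and the work lies in showing $\Lambda(x)\subseteq\uu(\{T_t\}_{t\in\R_+})$. For that the paper proves that the underlying linear semigroup $\{S_t\}$ is hypercyclic (Lemmas~\ref{fgfg} and~\ref{trtrt}), that affine combinations $z_1T_{t_1}+\dots+z_nT_{t_n}$ with $\sum z_j=1$ commute with each $T_t$ (Lemma~\ref{commu}), and that such combinations have dense range (Lemma~\ref{manyuni}, via a result from~\cite{semi}). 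Your minimal choice $Y_{c,x}=\{T_tx:t\in[0,c]\}$ bypasses all of this machinery: the injectivity of the orbit map (forced because a periodic orbit would be compact and dense, hence all of $X$) makes $Y_{c,x}$ a topological arc, and the inclusion $Y_{c,x}\subseteq\uu(\{T_t\}_{t\in\R_+})$ follows from condition~(1) alone by the elementary tail-of-orbit argument you give. This is a real simplification.

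What you lose is a clean disposal of the case $0<\dim X<\infty$. The paper gets this for free: Lemma~\ref{trtrt} delivers a hypercyclic linear semigroup on $X$, Theorem~A then gives a hypercyclic operator, and~\cite{ww} rules out finite dimension. Your sentence about ``splitting $X$ according to the spectrum of the generator \dots\ together with a translation argument'' is not a proof; the affine part does not simply translate away unless $I-S_t$ happens to be invertible, and while the claim is true it needs a few honest lines. The tidiest fix is to retain just this one consequence of Lemma~\ref{trtrt}---hypercyclicity of the linear part, hence $\dim X=\infty$---and then use your arc argument for condition~(2), which is where the genuine saving lies.
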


\section{A dichotomy for supercyclic linear semigroups}

An analogue of the following result for individual supercyclic
operators is well-known \cite{bama-book}.

\begin{proposition}\label{dich} Let $X$ be a complex topological vector
space and $\{T_t\}_{t\in\R_+}$ be a supercyclic strongly continuous
linear semigroup on $X$. Then either $(T_t-\lambda I)(X)$ is dense
in $X$ for every $t>0$ and $\lambda\in\C$ or there is a closed
hyperplane $H$ in $X$ such that $T_t(H)\subseteq H$ for every
$t\in\R_+$.
\end{proposition}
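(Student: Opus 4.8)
The plan is to mimic the well-known single-operator argument, replacing the single orbit by the full semigroup orbit and being careful about strong continuity. Suppose the first alternative fails: there exist $t_0>0$ and $\lambda_0\in\C$ with $(T_{t_0}-\lambda_0 I)(X)$ not dense in $X$. Then by Hahn--Banach there is a nonzero $f\in X'$ vanishing on $(T_{t_0}-\lambda_0 I)(X)$, i.e.\ $T_{t_0}'f=\lambda_0 f$. The natural candidate for the invariant hyperplane is $H=\ker g$ for a suitable nonzero eigenfunctional $g$ of the dual semigroup $\{T_t'\}$; the work is to produce such a $g$ and to check it is an eigenvector for \emph{every} $T_t'$, $t\in\R_+$, which forces $T_t(H)\subseteq H$ for all $t$.

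First I would show that $f$ is automatically a common eigenvector of the whole dual semigroup. For $s\ge 0$ the operators $T_{t_0}'$ and $T_s'$ commute (since $T_{t_0}$ and $T_s$ do), so $T_{t_0}'(T_s'f)=T_s'(T_{t_0}'f)=\lambda_0 T_s'f$; hence $T_s'f$ lies in the eigenspace $N=\ker(T_{t_0}'-\lambda_0 I)$ of $X'$ (in the weak-$*$ sense). The key point is that $N$ is one-dimensional: if $g_1,g_2\in N$ were linearly independent, then, writing $x$ for a supercyclic vector of the semigroup, the set $\{zT_tx:z\in\C,\ t\in\R_+\}$ is dense in $X$, so the map $(z,t)\mapsto (g_1(zT_tx),g_2(zT_tx))=z\lambda_0^{?}(\dots)$ would have dense range in $\C^2$; but $g_i(zT_tx)=z\,g_i(T_tx)$ and applying $T_{t_0}'g_i=\lambda_0 g_i$ we get that $t\mapsto (g_1(T_tx),g_2(T_tx))$ is, up to the scalar factor $z$, confined to a single complex line through the origin at each fixed $t$ modulo the period structure coming from $\lambda_0$ — more cleanly, $g_2(T_tx)/g_1(T_tx)$ would have to be constant, contradicting density of $\{(z g_1(T_tx), z g_2(T_tx))\}$ in $\C^2$. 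Thus $\dim N=1$, say $N=\C f$, and then $T_s'f\in N$ gives $T_s'f=\mu(s)f$ for a scalar $\mu(s)$, for every $s\ge 0$.

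Next I would pin down $\mu$. From $f\ne 0$ we get $\mu(s)\ne 0$ for all $s$ (as $T_s'$ is injective on $N$: if $\mu(s_1)=0$ then $f=T_0'f$... more directly, $T_{s}'$ restricted to the line $N$ is multiplication by $\mu(s)$, and the semigroup law $T_{s+r}'=T_s'T_r'$ forces $\mu(s+r)=\mu(s)\mu(r)$, with $\mu(0)=1$); strong continuity of the semigroup makes $s\mapsto \mu(s)=T_s'f(x_0)/f(x_0)$ continuous for any $x_0$ with $f(x_0)\ne 0$, so $\mu(s)=e^{\alpha s}$ for some $\alpha\in\C$, in particular $\mu(s)\ne 0$ for all $s$. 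Now set $H=\ker f$, a closed hyperplane since $f\in X'\setminus\{0\}$. For any $t\in\R_+$ and $h\in H$ we have $f(T_th)=(T_t'f)(h)=\mu(t)f(h)=0$, so $T_t(H)\subseteq H$, which is the second alternative. This completes the proof.

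The main obstacle I anticipate is the one-dimensionality of the eigenspace $N$: in the single-operator proof one uses the supercyclic vector of $T$ itself, but here the supercyclicity hypothesis is only on the semigroup, so one must feed the semigroup supercyclic vector $x$ into two independent functionals in $N$ and extract a contradiction from density of $\{(z g_1(T_tx),z g_2(T_tx)) : z\in\C,\ t\in\R_+\}$ in $\C^2$. The clean way is: fix $t$, vary $z$ — the orbit lies on the complex line $\{(w g_1(T_tx), w g_2(T_tx)):w\in\C\}$; as $t$ varies this line could a priori rotate, but $T_{t_0}'g_i=\lambda_0 g_i$ forces $g_i(T_{t_0+t}x)=\lambda_0 g_i(T_tx)$, so the "direction" $[g_1(T_tx):g_2(T_tx)]\in\PP^1$ is $t_0$-periodic and, by strong continuity, continuous, hence its image is a connected compact (proper, if the $g_i$ are independent) subset of $\PP^1$; the union over $t$ of the corresponding lines is then a proper closed cone in $\C^2$, contradicting density. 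Making this last topological step rigorous — in particular handling the cases where $g_i(T_tx)$ vanishes for some $t$ — is the only delicate part; everything else is the standard Hahn--Banach / commuting-operators bookkeeping.
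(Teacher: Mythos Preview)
Your very first step invokes Hahn--Banach to produce a nonzero $f\in X'$ vanishing on $(T_{t_0}-\lambda_0 I)(X)$, but the proposition is stated for an arbitrary complex topological vector space, with no local convexity assumed (the paper works throughout in this generality; note the reference to \cite{ww} on non-locally convex spaces). In a space such as $L^p[0,1]$ with $0<p<1$ the dual is trivial, so the existence of $f$ cannot be taken for granted. This is a genuine gap: your argument, as written, only proves the proposition in the locally convex case.

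The paper avoids duality altogether. It takes $H=\overline{(T_{t_0}-\lambda_0 I)(X)}$ directly as the candidate hyperplane; invariance of $H$ under every $T_s$ is immediate from commutativity of the semigroup. The substance is then to show $\dim X/H\leq 1$, which is done by passing to the quotient semigroup $\{S_t\}$ on $X/H$: one has $S_{t_0}=\lambda_0 I$, and the paper rules out $\dim X/H\geq 2$ by two lemmas (no supercyclic strongly continuous semigroup can satisfy $S_{t_0}=\lambda_0 I$ on an infinite-dimensional space, and no supercyclic strongly continuous semigroup exists on $\C^n$ for $2\leq n<\infty$). Your eigenspace argument is essentially the dual of this quotient argument, and in the locally convex setting it can be completed --- but note that your $\PP^1$ sketch for one-dimensionality of $N$ is not quite right as stated (a continuous image of $[0,t_0]$ can be all of $\PP^1\cong S^2$ via a Peano-type curve, so ``compact connected'' does not give ``proper''); the clean fix is exactly the finite-dimensional lemma the paper uses: push the semigroup down to $X/(\ker g_1\cap\ker g_2)\cong\C^2$ and invoke the nonexistence of supercyclic semigroups there. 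Finally, the exponential form $T_t'f=e^{-wt}f$ that you derive is established in the paper as a separate lemma \emph{after} the dichotomy, once one already knows $H$ is a hyperplane (and hence automatically the kernel of a continuous functional).
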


The most of the section is devoted to the proof of
Proposition~\ref{dich}. We need several elementary lemmas. Recall
that a subset $B$ of a vector space $X$ is called {\it balanced} if
$\lambda x\in B$ for every $x\in B$ and $\lambda\in\K$ such that
$|\lambda|\leq 1$.

\begin{lemma}\label{tv1} Let $K$ be a compact subset of an infinite
dimensional topological vector space and $X$ such that $0\notin K$.
Then $\Lambda=\{\lambda x:\lambda\in\K,\ x\in K\}$ is a closed
nowhere dense subset of $X$.
\end{lemma}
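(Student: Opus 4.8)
The plan is to show $\Lambda$ is closed first, then argue nowhere density by contradiction using local compactness. For closedness, suppose $y \in \overline{\Lambda}$. Pick a net $\lambda_\alpha x_\alpha \to y$ with $x_\alpha \in K$. Since $K$ is compact, pass to a subnet so that $x_\alpha \to x \in K$. If the scalars $\lambda_\alpha$ are bounded, pass to a further subnet so $\lambda_\alpha \to \lambda$, and then $\lambda_\alpha x_\alpha \to \lambda x \in \Lambda$ by joint continuity of scalar multiplication, so $y = \lambda x$. If the $\lambda_\alpha$ are unbounded, pass to a subnet with $|\lambda_\alpha| \to \infty$; then $x_\alpha = \lambda_\alpha^{-1}(\lambda_\alpha x_\alpha) \to 0 \cdot y = 0$, contradicting $x \in K$ and $0 \notin K$ (using that $X$ is Hausdorff so limits are unique and $K$ is closed). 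Hence $y \in \Lambda$, so $\Lambda$ is closed.

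For nowhere density: since $\Lambda$ is closed, it suffices to show $\Lambda$ has empty interior. Suppose not; then $\Lambda$ contains a nonempty open set $U$, and by translating (note $\Lambda$ is a cone but not a subspace, so I should be careful) — better: pick $z \in U$ with $z \neq 0$ (possible since $U$ is open and $X$ is infinite-dimensional, in particular not $\{0\}$); then $U - z$ is an open neighborhood of $0$, but $U - z \subseteq \Lambda - z$, which need not be $\Lambda$. Instead I would argue: if $\Lambda \supseteq U$ open nonempty, then since $\Lambda = \bigcup_{\lambda \in \K} \lambda K$ and $0 \notin K$, for each fixed direction the set $\Lambda$ meets each line $\K x$ (for $x \in K$) in the whole line, but a generic line need not hit $K$ at all. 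The cleanest route: observe that $\Lambda \cup \{0\}$ is a closed cone; if it has nonempty interior it must be a neighborhood of some point, and by scaling (the cone property) $\Lambda \cup \{0\}$ absorbs a neighborhood... Let me instead use: $\Lambda$ has nonempty interior $\Rightarrow$ there is an open $V \ni 0$ and a point $p$ with $p + V \subseteq \Lambda$. For small $|t|$, $t(p+V) \subseteq \Lambda$ too (cone property). As $t \to 0$, $tp \to 0$ and $tV$ still contains a neighborhood-scaled set; taking a balanced neighborhood $W \subseteq V$, one gets $tp + tW \subseteq \Lambda$, and since $tW$ can be made to contain any prescribed vector scaled down, one deduces $\Lambda$ contains a neighborhood of $0$, i.e. $\Lambda$ is absorbing in a strong sense. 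But then $\overline{\K K} = \Lambda$ being a neighborhood of $0$ forces, via compactness of $K$, that $X$ is finite-dimensional: indeed a neighborhood of $0$ contained in $\K K$ with $K$ compact means $X$ has a compact neighborhood of $0$ up to scalars — more precisely, $\Lambda \cap W_0$ for a suitable bounded neighborhood $W_0$ is contained in $\bigcup_{|\lambda| \leq M} \lambda K$ which is compact, so $X$ is locally compact, hence finite-dimensional (Riesz), contradicting the hypothesis.

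The main obstacle is the nowhere-density step, specifically making rigorous the reduction "$\Lambda$ has nonempty interior $\Rightarrow$ $X$ is locally compact." The subtle point is that $\Lambda$ is a cone through $0$ but not closed under addition, so having interior at a point $p \neq 0$ does not immediately give interior at $0$; one must exploit the cone structure together with a balanced neighborhood to slide the interior point toward $0$. Once local compactness is established, the Riesz theorem (a Hausdorff topological vector space with a totally bounded, equivalently precompact, neighborhood of zero is finite-dimensional) finishes it, contradicting infinite-dimensionality. I would phrase the slide argument as: if $p + W \subseteq \Lambda$ with $W$ balanced open, then for any $n \in \N$, $\frac1n(p+W) = \frac pn + \frac1n W \subseteq \Lambda$; fixing a target vector $v$ and an $n$ large enough that $\frac pn \in \frac12 W'$ for a smaller balanced $W'$ with $W' + W' \subseteq W$ — actually the cleanest is to note $\bigcup_{t>0} t(p+W)$ is an open subset of $\Lambda$ whose closure contains $0$, and it equals $\Lambda \setminus \{$some small cone$\}$... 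I will settle on the local-compactness contradiction as the engine and present the cone-sliding as the one genuine computation.
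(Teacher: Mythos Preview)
Your closedness argument via nets is correct and more detailed than the paper's (which dismisses it as a straightforward exercise). Your strategy for nowhere density --- deduce that $X$ contains a nonempty open set with compact closure and invoke the Riesz theorem --- is exactly the paper's strategy. The difference is that the paper executes the step you flag as the main obstacle in a single clean stroke, whereas your ``cone-sliding'' discussion never quite lands.

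Here is the paper's trick, which replaces your incomplete sliding argument. Since $\Lambda$ is invariant under multiplication by nonzero scalars, so is its interior $L$ (for $\lambda\neq 0$, $\lambda L$ is open and lies in $\Lambda$, hence in $L$). Because $K$ is closed and $0\notin K$, choose a \emph{balanced} open neighborhood $U$ of $0$ with $U\cap K=\varnothing$. Scaling any $p\in L$ toward $0$ and using scalar-invariance of $L$ gives $W:=L\cap U\neq\varnothing$. Now if $x\in W$ then $x=\lambda y$ with $y\in K$; were $|\lambda|>1$, balancedness of $U$ would force $y=\lambda^{-1}x\in U$, contradicting $U\cap K=\varnothing$. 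Hence $|\lambda|\leq 1$, so
\[
W\subseteq\{\lambda y:\lambda\in\K,\ |\lambda|\leq 1,\ y\in K\},
\]
which is compact as the continuous image of $\{\lambda:|\lambda|\leq 1\}\times K$. Thus $W$ is a nonempty open set with compact closure, and Riesz finishes.

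Two remarks on where your attempts went astray. First, trying to show that $0$ lies in the interior of $\Lambda$ is a detour: the argument above never needs a neighborhood of $0$ inside $\Lambda$, only an open subset of $\Lambda$ contained in a compact set. Second, your line ``$\Lambda\cap W_0$ for a suitable bounded neighborhood $W_0$'' is close, but ``bounded'' is the wrong adjective in a general topological vector space; what you need is \emph{balanced and disjoint from $K$}, which is precisely what forces the scalar into the closed unit disk.
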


\begin{proof} Closeness of $\Lambda$ in $X$ is a straightforward
exercise. Assume that $\Lambda$ is not nowhere dense. Since
$\Lambda$ is closed, its interior $L$ is non-empty. Since $K$ is
closed and $0\notin K$, we can find a non-empty balanced open set
$U$ such that $U\cap K=\varnothing$. Clearly $\lambda x\in L$
whenever $x\in L$ and $\lambda\in\K$, $\lambda\neq 0$. Since $U$ is
open and balanced the latter property of $L$ implies that the open
set $W=L\cap U$ is non-empty. Taking into account the definition of
$\Lambda$, the inclusion $L\subseteq \Lambda$, the equality $U\cap
K=\varnothing$ and the fact that $U$ is balanced, we see that every
$x\in W$ can be written as $x=\lambda y$, where $y\in K$ and
$\lambda\in{\mathbb D}=\{z\in \K:|z|\leq 1\}$. Since both $K$ and
${\mathbb D}$ are compact, $Q=\{\lambda y:\lambda\in {\mathbb D},\
y\in K\}$ is a compact subset of $X$. Since $W\subseteq Q$, $W$ is a
non-empty open set with compact closure. Such a set exists
\cite{shifer} only if $X$ is finite dimensional. This contradiction
completes the proof.
\end{proof}

The following lemma is a particular case of Lemma~5.1 in \cite{88}.

\begin{lemma}\label{fd} Let $X$ be a complex topological vector
space such that $2\leq \dim X<\infty$. Then $X$ supports no
supercyclic strongly continuous linear semigroups.
\end{lemma}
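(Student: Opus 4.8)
The plan is to reduce to an explicit matrix computation. Since $X$ is a Hausdorff complex topological vector space with $2\le\dim X=d<\infty$, it is linearly homeomorphic to $\C^d$ with its usual topology, so we may take $X=\C^d$. A strongly continuous linear semigroup on a finite-dimensional space is automatically continuous from $\R_+$ into $L(\C^d)$ in the operator norm (its matrix has columns $t\mapsto T_te_j$), hence uniformly continuous near $0$, so by the standard theory of one-parameter semigroups $T_t=e^{tA}$ for a unique $A\in L(\C^d)$, and each $T_t$ is invertible. Suppose, towards a contradiction, that $x$ is a supercyclic vector for $\{T_t\}_{t\ge0}$. Then $x\neq0$, the set $S=\{z\,e^{tA}x:z\in\C,\ t\ge0\}$ is dense in $\C^d$, and $e^{tA}x\neq0$ for all $t$ by invertibility.

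The key point, replacing the ``finitely many directions'' argument available for a single operator, is that the quotient of two well-chosen linear coordinates of $e^{tA}x$ depends on the single real parameter $t$ in a very restricted fashion. So I would look for a surjective linear map $\Phi=(f,g)\colon\C^d\to\C^2$ for which $\Phi(S)$ is visibly not dense; since $\Phi$ is a continuous linear surjection, density of $S$ forces density of $\Phi(S)$, yielding the contradiction. To construct $\Phi$ I would put $A$ in Jordan form and split into two cases. If $A$ has at least two Jordan blocks, let $f,g$ be the functionals reading off the top coordinate of two distinct blocks, with eigenvalues $\mu_1,\mu_2$; these are linearly independent eigenvectors of $A'$, so $f(e^{tA}x)=e^{t\mu_1}f(x)$ and $g(e^{tA}x)=e^{t\mu_2}g(x)$. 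If $A$ has a single Jordan block, of size $d\ge2$, I would work in a Jordan basis $e_1,\dots,e_d$ with $Ae_1=\mu e_1$, $Ae_j=\mu e_j+e_{j-1}$ and take $f,g$ to be the coordinate functionals $p_{d-1},p_d$; a direct expansion of $e^{tN}x$ with $N=A-\mu I$ shows that the $e_d$-coordinate of $e^{tA}x$ is $e^{t\mu}x_d$ and its $e_{d-1}$-coordinate is $e^{t\mu}(x_{d-1}+tx_d)$.

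In either case one first notes that $f(x)\neq0$ and $g(x)\neq0$: otherwise one coordinate of $\Phi(e^{tA}x)$ vanishes identically, so $\Phi(S)$ lies in a proper linear subspace of $\C^2$ and cannot be dense. Then for $z\neq0$ the point $\Phi(z\,e^{tA}x)=(a,b)$ has $b\neq0$, and $a/b$ equals $\tfrac{f(x)}{g(x)}\,e^{t(\mu_1-\mu_2)}$ in the first case and $\tfrac{x_{d-1}}{x_d}+t$ in the second. As $t$ runs over $\R_+$, in the first case $a/b$ traces a single point, a circle, a logarithmic spiral or a ray (according to $\mu_1-\mu_2$), and in the second case a half-line; in all cases this set $\Gamma$ fails to be dense in $\C$, so we may pick a nonempty open disc $D\subseteq\C$ disjoint from $\Gamma$. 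Then $\{(a,b)\in\C^2:b\neq0,\ a/b\in D\}$ is a nonempty open set that meets $\Phi(S)$ only possibly at the origin (which it does not, since it avoids $b=0$), so $\Phi(S)$ is not dense in $\C^2$ --- a contradiction. Hence no supercyclic vector exists.

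I expect the main obstacle to be precisely the point that distinguishes semigroups from single operators: for a single operator the relevant coordinate ratio runs over a \emph{countable} set, whereas here it runs over a one-real-parameter set, and one must isolate that in the ``bad'' case of one large Jordan block this set is merely an \emph{affine}, hence unbounded and non-dense, function of $t$. Once this is seen, the Jordan-form bookkeeping and the two-case split are routine, the only care being to check that the chosen functionals really are eigenvectors of $A'$ (multi-block case) or the correct coordinate functionals with the stated simple $t$-dependence (single-block case).
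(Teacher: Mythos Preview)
Your argument is correct and self-contained, whereas the paper does not actually prove this lemma: it simply records it as a particular case of Lemma~5.1 in \cite{88}. So there is nothing to compare at the level of ideas --- you give a genuine direct proof via Jordan normal form and a projection to $\C^2$, while the paper defers to an external reference. Your route has the advantage of being elementary and transparent about exactly where $\dim X\geq 2$ and $\K=\C$ are used (the ratio $a/b$ lives in $\C$, and a one-real-parameter curve cannot be dense there).

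Two small cosmetic points to tighten before writing it up. First, with your convention $Ae_1=\mu e_1$, $Ae_j=\mu e_j+e_{j-1}$, the eigenfunctionals of $A'$ are the \emph{last} coordinate functionals of each Jordan block (as your single-block computation with $p_d$ correctly reflects), not the ``top'' ones; in the multi-block case you should pick those. Second, in the single-block case only $g(x)=x_d\neq 0$ is actually needed: $f=p_{d-1}$ is not an eigenfunctional of $A'$, so $f(x)=0$ does not force $f(e^{tA}x)$ to vanish identically. This causes no trouble, since once $x_d\neq 0$ the ratio $x_{d-1}/x_d+t$ is a horizontal ray whether or not $x_{d-1}=0$; just rephrase the ``otherwise one coordinate vanishes identically'' sentence to apply to $g$ alone in that case. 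Neither point affects the validity of the strategy.
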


\begin{lemma}\label{iden} Let $X$ be an infinite dimensional
topological vector space, $\lambda\in\K$, $t_0>0$ and
$\{T_t\}_{t\in\R_+}$ be a strongly continuous linear semigroup such
that $T_{t_0}=\lambda I$. Then $\{T_t\}_{t\in\R_+}$ is not
supercyclic.
\end{lemma}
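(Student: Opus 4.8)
The plan is to argue by contradiction. Suppose $\{T_t\}_{t\in\R_+}$ has a supercyclic vector $x$, so that $O:=\{zT_tx:z\in\K,\ t\in\R_+\}$ is dense in $X$; since $\dim X=\infty$ we have $X\neq\{0\}$, hence $O\neq\{0\}$ and $x\neq0$. First I would use the relation $T_{t_0}=\lambda I$ to compress the orbit into a one-parameter cone. Writing $t=nt_0+s$ with $n\in\Z_+$ and $s\in[0,t_0)$ gives $T_t=(T_{t_0})^nT_s=\lambda^nT_s$ when $n\geq1$ (and $T_t=T_s$ when $n=0$), so $zT_tx\in\K\,T_sx$ for all such $t$, while the $n=0$ terms already exhaust $\K\,T_sx$. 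Hence, putting $\phi(s):=T_sx$ (continuous on $[0,t_0]$ by strong continuity),
\[ O=\bigcup_{s\in[0,t_0)}\K\,\phi(s). \]

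The argument then splits according to whether $\phi$ vanishes on $\R_+$. \emph{If $T_sx\neq0$ for every $s\geq0$:} then $K:=\phi([0,t_0])$ is compact and $0\notin K$, so $O\subseteq\K K$, which by Lemma~\ref{tv1} is closed and nowhere dense in $X$ — impossible, as $O$ is dense. \emph{If $T_\tau x=0$ for some $\tau>0$:} the set $\{s\geq0:T_sx=0\}$ is closed and upward closed (if $T_sx=0$ and $s'\geq s$ then $T_{s'}x=T_{s'-s}T_sx=0$), hence equals $[\tau,\infty)$ for the least such $\tau$, and $\tau>0$ because $\phi(0)=x\neq0$. Set $\delta:=\tfrac12\min\{\tau,t_0\}$, so that $0<\delta<\min\{\tau,t_0\}$ and $0<\tau-\delta<\tau$, and consider the closed sets $N:=\{w\phi(s):w\in\K,\ s\in[0,\delta]\}$ and $W:=\ker T_{\tau-\delta}$. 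Since $\phi$ is nonzero on $[0,\delta]\subseteq[0,\tau)$, Lemma~\ref{tv1} shows $N$ is nowhere dense; since $T_{\tau-\delta}x=\phi(\tau-\delta)\neq0$, $W$ is a proper closed subspace and hence nowhere dense (a closed subspace with nonempty interior contains an absorbing neighbourhood of $0$, so it equals $X$). Next I would check $O\subseteq N\cup W$: if $v=z\phi(s)\in O$ with $s<\delta$ then $v\in N$, while if $\delta\leq s<t_0$ then $T_{\tau-\delta}v=z\,\phi(s+\tau-\delta)=0$ since $s+\tau-\delta\geq\tau$, so $v\in W$. Consequently $X=\overline O\subseteq N\cup W$, so $X\setminus N$ is a nonempty open subset of $W$, contradicting that $W$ is nowhere dense.

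I expect the second case to be the genuine obstacle. It is exactly the situation $\lambda=0$ — picking $n$ with $nt_0\geq\tau$ gives $\lambda^nx=(T_{t_0})^nx=T_{nt_0}x=T_{nt_0-\tau}(T_\tau x)=0$, whence $\lambda=0$ — and here the one-set argument of the first case breaks down: the compact set $\phi([0,t_0])$ now contains $0$, and a cone $\K\cdot C$ over a compact arc $C$ ending at $0$ can in fact be dense in an infinite-dimensional space, so Lemma~\ref{tv1} cannot be applied to the whole orbit at once. The way around it is to keep only a short initial arc $\phi([0,\delta])$ that stays away from $0$ (whose cone is then nowhere dense), and to note that the remaining tail of the orbit is annihilated by $T_{\tau-\delta}$ and so lies in the proper closed subspace $\ker T_{\tau-\delta}$; since no nonempty topological vector space is a union of two nowhere-dense closed sets, no supercyclic vector can exist.
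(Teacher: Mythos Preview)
Your argument is correct and follows essentially the same route as the paper: compress the orbit cone to one period using $T_{t_0}=\lambda I$, handle the non-vanishing case (equivalently $\lambda\neq0$) directly via Lemma~\ref{tv1}, and in the vanishing ($\lambda=0$) case split the orbit into a short initial arc (Lemma~\ref{tv1} again) plus a tail lying in a proper closed subspace. The only difference is the choice of that subspace: the paper takes the range closure $\overline{T_s(X)}$ of a nilpotent $T_s$ (nilpotent because $T_s^n=T_{ns}=0$ once $ns\geq t_0$), whereas you take the kernel $\ker T_{\tau-\delta}$, proper because $x$ itself is not in it; both choices work, and yours sidesteps the small check that a nilpotent operator cannot have dense range.
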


\begin{proof} Let $x\in X\setminus\{0\}$. It suffices to show that
$x$ is not a supercyclic vector for $\{T_t\}_{t\in\R_+}$.

First, we consider the case $\lambda=0$. By the strong continuity,
there is $s>0$ such that $0\notin K=\{T_tx:t\in[0,s]\}$ and $K$ is a
compact subset of $X$. By Lemma~\ref{tv1}, $A=\{zT_tx:z\in\K,\
t\in[0,s]\}$ is nowhere dense in $X$. Take $n\in\N$ such that
$ns\geq t_0$. Since $T_{t_0}=0$ and $ns\geq t_0$, we have
$T_s^n=T_{ns}=0$. Then $Y=\overline{T_s(X)}\neq X$. In particular,
$Y$ is nowhere dense in $X$. Clearly, $T_tx\in Y$ whenever $t\geq
s$. Hence $\{zT_tx:t\in\R_+,\ z\in\K\}$ is contained in $A\cup Y$
and therefore is nowhere dense in $X$. Thus $x$ is not a supercyclic
vector for $\{T_t\}_{t\in\R_+}$.

Assume now that $\lambda\neq 0$. Then $T_{t_0n}x=\lambda^nx\neq 0$
for every $n\in\Z_+$. Hence each of the compact sets $K_n=\{T_t
x:t_0n\leq t\leq t_0(n+1)\}$ with $n\in\Z_+$ does not contain $0$.
By Lemma~\ref{tv1}, the sets $A_n=\{zT_tx:z\in\C,\ t_0n\leq t\leq
t_0(n+1)\}$ are nowhere dense in $X$. On the other hand, for every
$t\in[t_0n,t_0(n+1)]$, $T_{t+t_0}x=T_tT_{t_0}x=\lambda T_t x$ and
therefore $A_n=A_{n+1}$ for each $n\in\Z_+$. Hence
$\{zT_tx:t\in\R_+,\ z\in\K\}$, which is clearly the union of $A_n$,
coincides with $A_1$ and therefore is nowhere dense. Thus $x$ is not
a supercyclic vector for $\{T_t\}_{t\in\R_+}$.
\end{proof}

\begin{lemma}\label{spa} Let $X$ be a complex
topological vector space and $\{T_t\}_{t\in\R_+}$ be a supercyclic
strongly continuous linear semigroup on $X$. Let also $t_0>0$ and
$\lambda\in\C$. Then the space $Y=\overline{(T_{t_0}-\lambda I)(X)}$
either coincides with $X$ or is a closed hyperplane in $X$.
\end{lemma}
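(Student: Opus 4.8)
The plan is to reduce the statement to a fact about the quotient topological vector space $X/Y$ and then invoke Lemmas~\ref{iden} and~\ref{fd}. Note first that $Y$ is a closed linear subspace (the closure of the range of a linear map) and that it is invariant under the whole semigroup: by the semigroup law each $T_s$ commutes with $T_{t_0}$, hence with $T_{t_0}-\lambda I$, so $T_s\big((T_{t_0}-\lambda I)(X)\big)=(T_{t_0}-\lambda I)\big(T_s(X)\big)\subseteq(T_{t_0}-\lambda I)(X)$; taking closures and using continuity of $T_s$ yields $T_s(Y)\subseteq Y$. Since $Y$ is closed, $X/Y$ is a Hausdorff complex topological vector space, the quotient map $q\colon X\to X/Y$ is a continuous open surjection, and $\widetilde T_t\,q(x)=q(T_tx)$ well-defines a linear semigroup $\{\widetilde T_t\}_{t\in\R_+}$ on $X/Y$.

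Next I would verify that $\{\widetilde T_t\}$ inherits the relevant properties of $\{T_t\}$. It is strongly continuous, because for each $x\in X$ the map $t\mapsto\widetilde T_t\,q(x)=q(T_tx)$ is the composition of the continuous maps $t\mapsto T_tx$ and $q$, and every element of $X/Y$ has the form $q(x)$. It satisfies $\widetilde T_{t_0}=\lambda\,\mathrm{id}_{X/Y}$, since $(T_{t_0}-\lambda I)(X)\subseteq Y=\ker q$. Finally it is supercyclic: if $x\in X$ is a supercyclic vector for $\{T_t\}$, then $\{zT_tx:z\in\C,\ t\in\R_+\}$ is dense in $X$, and since a continuous surjection carries dense sets onto dense sets, $\{z\widetilde T_t\,q(x):z\in\C,\ t\in\R_+\}=q\big(\{zT_tx:z\in\C,\ t\in\R_+\}\big)$ is dense in $X/Y$, so $q(x)$ is a supercyclic vector for $\{\widetilde T_t\}$.

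It then remains to bound $\dim(X/Y)$. Assume $Y\neq X$, so $\dim(X/Y)\geq 1$. If $X/Y$ were infinite dimensional, then the equality $\widetilde T_{t_0}=\lambda\,\mathrm{id}_{X/Y}$ together with Lemma~\ref{iden} would force $\{\widetilde T_t\}$ to be non-supercyclic, a contradiction. If instead $2\leq\dim(X/Y)<\infty$, then Lemma~\ref{fd} would again contradict the supercyclicity of $\{\widetilde T_t\}$. Hence $\dim(X/Y)=1$, i.e.\ $Y$ is a closed hyperplane in $X$, which is exactly the alternative asserted by the lemma.

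The argument is mostly bookkeeping, and I do not expect a genuine obstacle: the one place that requires a little care is checking that the quotient semigroup really is a strongly continuous linear semigroup and that supercyclicity descends to $X/Y$ (which hinges on $Y$ being invariant under every $T_s$, not just a single operator). Once those points are in place, Lemmas~\ref{iden} and~\ref{fd} supply the whole dichotomy.
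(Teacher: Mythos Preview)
Your proof is correct and follows essentially the same approach as the paper: pass to the quotient $X/Y$, observe that the induced semigroup is strongly continuous, supercyclic, and satisfies $\widetilde T_{t_0}=\lambda I$, and then invoke Lemmas~\ref{iden} and~\ref{fd} to rule out $\dim(X/Y)\geq 2$. The paper's version is terser, leaving the invariance of $Y$ and the inheritance of supercyclicity and strong continuity as routine verifications, whereas you spell these out explicitly.
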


\begin{proof} Using the semigroup property, it is easy to see
that $Y$ is invariant for each $T_t$. Factoring $Y$ out, we arrive
to a supercyclic strongly continuous linear semigroup
$\{S_t\}_{t\in\R_+}$ acting on $X/Y$, where $S_t(x+Y)=T_tx+Y$.
Obviously, $S_{t_0}=\lambda I$. If $X/Y$ is infinite dimensional, we
arrive to a contradiction with Lemma~\ref{iden}. If $X/Y$ is finite
dimensional and $\dim X/Y\geq 2$, we obtain a contradiction with
Lemma~\ref{fd}. Thus $\dim X/Y\leq 1$, as required.
\end{proof}

\begin{proof}[Proof of Proposition~$\ref{dich}$] Assume that there is
$t>0$ and $\lambda\in\K$ such that $(T_t-\lambda I)(X)$ is not dense
in $X$. By Lemma~\ref{spa}, $H=\overline{(T_t-\lambda I)(X)}$ is a
closed hyperplane in $X$. It is easy to see that $H$ is invariant
for every $T_t$.
\end{proof}

The following lemma provides some extra information on the second
case in Proposition~\ref{dich}.

\begin{lemma}\label{dich1} Let $X$ be a complex topological vector
space and $\{T_t\}_{t\in\R_+}$ be a strongly continuous linear
semigroup on $X$. Assume also that there is a closed hyperplane $H$
in $X$ such that $T_t(H)\subseteq H$ for every $t\in\R_+$ and let
$f\in X'$ be such that $H=\ker f$. Then there exists $w\in\C$ such
that $e^{wt}T'_tf=f$ for every $t\in\R_+$.
\end{lemma}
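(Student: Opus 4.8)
The plan is to reduce the statement to the classical description of continuous one-parameter multiplicative functions. The invariance $T_t(H)\subseteq H$ with $H=\ker f$ says exactly that $f(T_tx)=0$ whenever $f(x)=0$, i.e.\ the functional $T_t'f=f\circ T_t$ annihilates $\ker f$. Since every continuous linear functional vanishing on the kernel of a nonzero $f\in X'$ is a scalar multiple of $f$, for each $t\in\R_+$ there is a unique $\phi(t)\in\C$ with $T_t'f=\phi(t)f$. From $T_0=I$ we get $\phi(0)=1$, and from $T_{t+s}=T_tT_s$, which gives $T_{t+s}'=T_s'T_t'$, we obtain $\phi(t+s)f=\phi(t)\phi(s)f$, hence $\phi(t+s)=\phi(t)\phi(s)$ for all $t,s\in\R_+$.

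Next I would establish that $\phi$ is continuous and nowhere zero. Fixing any $x\in X$ with $f(x)\neq0$ (such $x$ exists because $f\neq0$), strong continuity of the semigroup makes $t\mapsto f(T_tx)=\phi(t)f(x)$ continuous, so $\phi\colon\R_+\to\C$ is continuous. If $\phi(t_0)=0$ for some $t_0>0$, then for any $t>0$, picking $n\in\N$ with $nt\geq t_0$ gives $\phi(t)^n=\phi(nt)=\phi(nt-t_0)\phi(t_0)=0$, so $\phi\equiv0$ on $(0,\infty)$; this contradicts continuity at $0$ together with $\phi(0)=1$. Hence $\phi(t)\neq0$ for every $t\in\R_+$.

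Finally I would invoke the standard fact that a continuous homomorphism $\phi$ from $(\R_+,+)$ into the multiplicative group $\C\setminus\{0\}$ with $\phi(0)=1$ has the form $\phi(t)=e^{-wt}$ for some $w\in\C$. Concretely, on a short interval $[0,\delta]$ on which $\phi$ omits the ray $(-\infty,0]$ one lifts $\phi$ through the exponential to a continuous $\psi$ with $e^{\psi}=\phi$ and $\psi(0)=0$; multiplicativity then forces $\psi$ to be additive on $[0,\delta]$, hence linear there by continuity, say $\psi(t)=-wt$, and the equality $\phi(t)=e^{-wt}$ propagates to all of $\R_+$ via $\phi(n\tau)=\phi(\tau)^n$. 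Consequently $e^{wt}T_t'f=e^{wt}\phi(t)f=f$ for every $t\in\R_+$, which is the assertion. The only genuinely delicate point is this last step---the choice of a continuous logarithm and the resulting Cauchy functional equation---while the rest is routine bookkeeping with the semigroup identities.
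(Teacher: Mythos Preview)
Your proof is correct and follows essentially the same route as the paper's: both establish $T'_t f=\phi(t)f$ for a continuous multiplicative function $\phi$ with $\phi(0)=1$, then invoke the classical characterization $\phi(t)=e^{-wt}$. You supply an explicit nowhere-vanishing argument and a sketch of the logarithm-lifting step, both of which the paper simply takes for granted as part of the ``standard fact''.
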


\begin{proof}Since $H=\ker f$ is invariant for every $T_t$, there is
a unique function $\phi:\R_+\to\C$ such that $T'_tf=\phi(t)f$ for
every $t\in\R_+$. Pick $u\in X$ such that $f(u)=1$. Then
$(T'_tf)(u)=f(T_tu)=\phi(t)$ for every $t\in\R_+$. Since
$\{T_t\}_{t\in\R_+}$ is strongly continuous, $\phi$ is continuous.
The semigroup property for $\{T_t\}_{t\in\R_+}$ implies the
semigroup property for the dual operators: $T'_0=I$ and
$T'_{t+s}=T'_tT'_s$ for every $t,s\in\R_+$. Together with the
equality $T'_tf=\phi(t)f$, it implies that $\phi(0)=1$ and
$\phi(t+s)=\phi(t)\phi(s)$ for every $t,s\in\R_+$. The latter and
the continuity of $\phi$ means that there is $w\in\C$ such that
$\phi(t)=e^{-wt}$ for each $t\in\R_+$. Thus $e^{wt}T'_tf=f$ for
$t\in\R_+$, as required.
\end{proof}

\section{Supercyclicity versus universality of affine maps}

In this section we relate the supercyclicity of an operator or a
semigroup in the case of the existence of an invariant hyperplane
and the universality of an affine map or an affine semigroup. We
start with the following general lemma.

\begin{lemma}\label{TRA} Let $X$ be a topological vector space, $u\in
X$, $f\in X'\setminus\{0\}$, $f(u)=1$ and $H=\ker f$. Assume also
that $\{T_a\}_{a\in A}$ is a family of continuous linear operators
on $X$ such that $T'_af=f$ for each $a\in A$. Then the family
$\F=\{zT_a:z\in\K,\ a\in A\}$ is universal if and only if the family
${\cal G}=\{R_a\}_{a\in A}$ of affine maps $R_a:H\to H$,
$R_ax=(T_au-u)+T_ax$ is universal on $H$. Moreover, $x\in X$ is
universal for $\F$ if and only if $x=\lambda(u+w)$, where
$\lambda\in \K\setminus\{0\}$ and $w$ is universal for $\cal G$.
Next, if $A=\Z_+$ and $T_a=T_1^a$ for every $a\in\Z_+$, then
$R_a=R_1^a$ for every $a\in\Z_+$. Finally, if $A=\R_+$ and
$\{T_a\}_{a\in\R_+}$ is a strongly $($respectively, jointly$)$
continuous linear semigroup, then $\{R_a\}_{a\in\R_+}$ is a strongly
$($respectively, jointly$)$ continuous affine semigroup.
\end{lemma}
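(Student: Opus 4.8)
The plan is to unwind the definitions and track universal elements through the natural affine conjugation. Fix $u$, $f$, $H = \ker f$ with $f(u)=1$, so that every $x \in X$ decomposes uniquely as $x = \lambda u + h$ with $\lambda = f(x) \in \K$ and $h = x - f(x)u \in H$. The hypothesis $T_a' f = f$ says precisely that $f(T_a x) = f(x)$ for all $x$, i.e.\ each $T_a$ preserves the affine hyperplane $u + H$; writing $T_a(u+h) = u + (T_a u - u) + T_a h = u + R_a h$ for $h \in H$ shows $R_a$ is a well-defined affine self-map of $H$, and its linear part $T_a|_H$ is continuous, so $R_a \in A(H)$.

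The first main step is the orbit identity: for $x = \lambda u + h$ with $\lambda \neq 0$ and $z \in \K$, compute $z T_a x = z\lambda\, T_a(u + h/\lambda) = z\lambda\, (u + R_a(h/\lambda))$. Hence the orbit $\{z T_a x : z \in \K,\ a \in A\}$ is the image under the homeomorphism $\K \times H \to X$, $(\mu, k) \mapsto \mu u + \mu k$ (a homeomorphism onto $\{x : f(x) \neq 0\} \cup \{0\}$, continuous and with continuous inverse on the relevant set) of the set $\{(z\lambda, R_a(h/\lambda)) : z \in \K,\ a \in A\}$. For fixed nonzero $\lambda$ and $h$, as $z$ ranges over $\K$ this is $\{(\mu, R_a w) : \mu \in \K,\ a \in A\}$ where $w = h/\lambda$. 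Thus density of the orbit of $x$ for $\F$ is equivalent to density of $\{R_a w : a \in A\}$ in $H$: density in $X$ of the product-type set $\K \times \overline{\{R_a w\}}$ forces $\overline{\{R_a w\}} = H$ (project via $f$ to see the $\K$-factor is automatically dense; the $H$-component must fill $H$), and conversely if $\{R_a w\}$ is dense in $H$ then $\{\mu u + \mu R_a w\}$ is dense in $X$ since scalar multiples of a dense subset of the affine hyperplane $u + H$ sweep out a dense subset of $X$. One must also dispose of the case $f(x) = 0$: if $f(x) = 0$ then $f(zT_a x) = 0$ for all $z, a$, so the orbit lies in the proper closed subspace $H \neq X$ and cannot be dense; hence every universal vector $x$ for $\F$ has $f(x) = \lambda \neq 0$, and writing $w = x/\lambda - u \in H$ we get $x = \lambda(u + w)$ with $w \in \uu(\mathcal{G})$. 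This establishes the ``if and only if'' together with the explicit description of $\uu(\F)$.

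The remaining assertions are bookkeeping. If $A = \Z_+$ and $T_a = T_1^a$, then the map $T \mapsto R$ is multiplicative in the sense that $R_{a+b} = R_a R_b$: indeed $u + R_{a+b} h = T_{a+b}(u+h) = T_a T_b (u + h) = T_a(u + R_b h) = u + R_a R_b h$, whence $R_a = R_1^a$ by induction (using $R_0 = I_H$, which follows from $T_0 = I$). The same computation with $A = \R_+$ gives $R_{t+s} = R_t R_s$ and $R_0 = I_H$, so $\{R_t\}$ is an affine semigroup; strong continuity of $t \mapsto T_t x$ for all $x \in X$ restricts to strong continuity of $t \mapsto R_t h$ for $h \in H$ (since $R_t h = T_t(u+h) - u$), and joint continuity of $(t,x) \mapsto T_t x$ likewise restricts to joint continuity of $(t,h) \mapsto R_t h$ on $\R_+ \times H$. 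I expect no serious obstacle here; the one point requiring slight care is the topological argument that density of a ``product'' set $S \subseteq \K \times H$ (mapped into $X$ by $(\mu,k)\mapsto \mu u + \mu k$) is governed solely by the $H$-coordinate when $S$ already contains all of $\K$ in each fiber — this is where one uses that $u + H$ is an affine hyperplane and that $\K \cdot (u+H)$ is dense in $X$, i.e.\ that $H$ has codimension one so $\{x : f(x) \neq 0\}$ is dense.
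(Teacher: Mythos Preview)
Your approach is essentially the paper's: both reduce the $\F$-universality of $x_0$ to the density of $\{T_a(x_0/f(x_0)):a\in A\}$ in the invariant affine hyperplane $u+H$, then conjugate by the translation $\Phi(y)=u+y$ to pass to the affine maps $R_a$ on $H$, with the semigroup and continuity properties inherited by restriction and conjugation. The one spot where your write-up is looser is the density equivalence: the map $(\mu,k)\mapsto \mu u+\mu k$ is not a homeomorphism (it collapses $\{0\}\times H$ to $0$), and the paper avoids this by working instead with the genuine homeomorphism $\K^*\times(u+H)\to X\setminus H$, $(\mu,p)\mapsto \mu p$, together with the fact that the closed hyperplane $H$ is nowhere dense---which is exactly the point you flag at the end, so the argument goes through once stated this way.
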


\begin{proof} Since $T_a(H)\subseteq H$ for every $a$, vectors from
$H$ can not be universal for $\F$. Obviously, they also do not have
the form $\lambda(u+w)$ with $\lambda\in \K\setminus\{0\}$ and $w\in
H$.

Now let $x_0\in X\setminus H$. Then $f(x_0)\neq 0$ and therefore
$x=\frac{x_0}{f(x_0)}\in u+H$. Since $T_a(u+H)\subseteq u+H$ for
every $a\in A$, $O=\{T_ax:a\in A\}\subseteq u+H$. It is
straightforward to see that $x_0$ is universal for $\F$ if and only
if $O$ is dense in $u+H$. That is, $x_0$ is universal for $\F$ if
and only if $x$ is universal for the family $\{Q_a\}_{a\in A}$,
where each $Q_a:u+H\to u+H$ is the restriction of $T_a$ to the
invariant subset $u+H$. Obviously, the translation map $\Phi:H\to
u+H$, $\Phi(y)=u+y$ is a homeomorphism and $R_a=\Phi^{-1}Q_a\Phi$
for every $a\in A$. It follows that $x_0$ is universal for $\F$ if
and only if $\Phi^{-1}x=x-u$ is universal for $\cal G$. Denoting
$w=x-u$, we see that the latter happens if and only if
$x_0=f(x_0)(u+w)$ with $w\in\uu({\cal G})$.

Since $Q_a$ are the restrictions of $T_a$ to the invariant subset
$u+H$ and $R_a$ are similar to $Q_a$ with the similarity independent
on $a$, $\{R_a\}$ inherits all the semigroup or continuity
properties from $\{T_a\}$. The proof is complete.
\end{proof}

The following two lemmas are particular cases of Lemma~\ref{TRA}.

\begin{lemma}\label{trar} Let $X$ be a topological vector space, $u\in
X$, $f\in X'\setminus\{0\}$, $f(u)=0$ and $H=\ker f$. Then $T\in
L(X)$ satisfying $T'f=f$ is supercyclic if and only if the map
$R:H\to H$, $Rx=(Tu-u)+Tx$ is universal. Moreover, $x\in X$ is a
supercyclic vector for $T$ if and only if $x=\lambda(u+w)$, where
$\lambda\in \K\setminus\{0\}$ and $w\in\uu(R)$.
\end{lemma}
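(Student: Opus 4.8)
The plan is to deduce Lemma~\ref{trar} as the special case of Lemma~\ref{TRA} corresponding to the index set $A=\Z_+$ with $T_a=T^a$. Before invoking Lemma~\ref{TRA} I would check that its hypotheses hold in this situation. The data $u,f,H$ are exactly as there, and the only point needing a word is that $T'_af=f$ for every $a\in\Z_+$; this follows by induction from $T'f=f$, since $T'_0=I$ gives $T'_0f=f$ and $(T')^{a+1}f=T'\bigl((T')^af\bigr)=T'f=f$.

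With Lemma~\ref{TRA} available, the argument is a matter of matching up definitions. By definition the supercyclic vectors of $T$ are precisely the universal elements of the family $\F=\{zT^nx:z\in\K,\ n\in\Z_+\}$, so $T$ is supercyclic if and only if $\F$ is universal. Lemma~\ref{TRA} tells us that $\F$ is universal if and only if the family ${\cal G}=\{R_a\}_{a\in\Z_+}$ of affine maps $R_ax=(T^au-u)+T^ax$ on $H$ is universal; and by the ``Next'' clause of Lemma~\ref{TRA} (the case $A=\Z_+$, $T_a=T_1^a$) we have $R_a=R^a$ for the map $R=R_1$, which is precisely $Rx=(Tu-u)+Tx$. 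Hence ${\cal G}=\{R^n:n\in\Z_+\}$, and this family is universal exactly when $R$, viewed as a self-map of $H$, is universal. This yields the first equivalence. For the ``moreover'' part, Lemma~\ref{TRA} also states that $x$ is universal for $\F$ --- that is, a supercyclic vector for $T$ --- if and only if $x=\lambda(u+w)$ with $\lambda\in\K\setminus\{0\}$ and $w$ universal for ${\cal G}=\{R^n:n\in\Z_+\}$, i.e.\ $w\in\uu(R)$; this is exactly the asserted description of the supercyclic vectors of $T$.

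I do not expect any real obstacle: the whole content is carried by the already established Lemma~\ref{TRA}, and the only ``work'' is the trivial propagation $T'f=f\Rightarrow T'_af=f$ together with the observation that the iterates $R^n$ are precisely the maps furnished by the composition identity recorded in Lemma~\ref{TRA}. The one point worth a second glance is that the universality notions line up --- ``$\F$ is universal'' is the same as ``$T$ is supercyclic'', and ``${\cal G}=\{R^n:n\in\Z_+\}$ is universal'' is the same as ``$R$ is universal'' --- and both of these are immediate from the definitions fixed in the introduction.
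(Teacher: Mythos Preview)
Your approach is exactly the paper's: the paper simply says that Lemma~\ref{trar} (and Lemma~\ref{trar1}) are ``particular cases of Lemma~\ref{TRA}'', and you have spelled out that specialization correctly, including the small check that $T'f=f$ propagates to $(T')^af=f$ and the use of the ``Next'' clause to identify $R_a=R^a$.

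One caveat: you assert that ``the data $u,f,H$ are exactly as there'', but the printed hypothesis in Lemma~\ref{trar} is $f(u)=0$, whereas Lemma~\ref{TRA} requires $f(u)=1$. This is evidently a typographical slip in the paper (with $f(u)=0$ the vector $u$ lies in $H$, so $\lambda(u+w)\in H$ could never be supercyclic, and the companion Lemma~\ref{trar1} has the correct $f(u)=1$); still, it would be worth flagging the typo rather than silently asserting the hypotheses match.
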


\begin{lemma}\label{trar1} Let $X$ be a topological vector space, $u\in
X$, $f\in X'\setminus\{0\}$, $f(u)=1$ and $H=\ker f$. Then a
strongly $($respectively, jointly$)$ continuous linear semigroup
$\{T_t\}_{t\in\R_+}$ on $X$ satisfying $T'_tf=f$ for $t\in\R_+$ is
supercyclic if and only if the strongly $($respectively, jointly$)$
continuous affine semigroup $\{R_t\}_{t\in\R_+}$ on $H$ defined by
$R_tx=(T_tu-u)+T_tx$ is universal. Moreover, $x\in X$ is a
supercyclic vector for $\{T_t\}_{t\in\R_+}$ if and only if
$x=\lambda(u+w)$, where $\lambda\in \K\setminus\{0\}$ and
$w\in\uu(\{R_t\}_{t\in\R_+})$.
\end{lemma}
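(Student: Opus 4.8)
The plan is to read this off directly from Lemma~\ref{TRA}. Indeed, Lemma~\ref{trar1} is nothing but the case $A=\R_+$ of Lemma~\ref{TRA}, with $T_a=T_t$ for $a=t\in\R_+$: the hypotheses $u\in X$, $f\in X'\setminus\{0\}$, $f(u)=1$, $H=\ker f$ coincide, and the condition $T'_af=f$ for all $a\in A$ is exactly the standing assumption $T'_tf=f$ on the semigroup. So I would simply invoke Lemma~\ref{TRA} with this choice of $A$ and $\{T_a\}$.

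Having done so, I would match the conclusions with the definitions from Section~\ref{s1}. By definition the linear semigroup $\{T_t\}_{t\in\R_+}$ is supercyclic precisely when the family $\F=\{zT_t:z\in\K,\ t\in\R_+\}$ is universal, and $x\in X$ is a supercyclic vector for the semigroup precisely when $x\in\uu(\F)$; moreover the affine maps $R_ax=(T_au-u)+T_ax$ of Lemma~\ref{TRA} are exactly the $R_t$ of the present statement. Hence the equivalence ``$\F$ universal'' $\Longleftrightarrow$ ``$\{R_t\}_{t\in\R_+}$ universal on $H$'' from Lemma~\ref{TRA} is the first assertion, the ``Moreover'' sentence of Lemma~\ref{TRA} (that $x\in\uu(\F)$ iff $x=\lambda(u+w)$ with $\lambda\in\K\setminus\{0\}$ and $w\in\uu(\{R_t\}_{t\in\R_+})$) is the ``Moreover'' clause here, and the last sentence of Lemma~\ref{TRA} gives that $\{R_t\}_{t\in\R_+}$ inherits strong (respectively joint) continuity from $\{T_t\}_{t\in\R_+}$ and is a semigroup.

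There is no genuine obstacle: all the work was already carried out in Lemma~\ref{TRA}, and the only care needed is the bookkeeping of identifying the ``$zT_t$''-family in the definition of a supercyclic semigroup with the family $\F$ of Lemma~\ref{TRA} and of checking that the strong-versus-joint-continuity alternative propagates. Both are immediate, so the proof reduces to a single sentence: apply Lemma~\ref{TRA} with $A=\R_+$ and $T_a=T_t$.
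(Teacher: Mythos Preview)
Your proposal is correct and is exactly how the paper proceeds: the paper explicitly states that this lemma (together with Lemma~\ref{trar}) is a particular case of Lemma~\ref{TRA}, which is precisely what you do by specializing to $A=\R_+$ and invoking the final clause of Lemma~\ref{TRA} for the strong/joint continuity and semigroup property of $\{R_t\}$.
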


\section{Universality of affine semigroups}

The proof of the following lemma is a matter of an easy routine
verification.

\begin{lemma}\label{uaff0} Let $X$ be a topological vector space,
$\{T_t\}_{t\in\R_+}$ be a collection of continuous affine maps on
$X$, $\{S_t\}_{t\in\R_+}$ be a collection of continuous linear
operators on $X$ and $t\mapsto w_t$ be a map from $\R_+$ to $X$ such
that $T_tx=w_t+S_tx$ for every $t\in\R_+$ and $x\in X$.

Then $\{T_t\}_{t\in\R_+}$ is an affine semigroup if and only if
$\{S_t\}_{t\in\R_+}$ is a linear semigroup,
\begin{equation}\label{afff}
\text{$w_0=0$ and}\ \ \ w_{t+s}=w_t+S_tw_s\ \ \ \text{for every
$s,t\in\R_+$.}
\end{equation}
Moreover, the semigroup $\{T_t\}_{t\in\R_+}$ is strongly continuous
if and only if $\{S_t\}_{t\in\R_+}$ is strongly continuous and the
map $t\mapsto w_t$ is continuous. Finally, the semigroup
$\{T_t\}_{t\in\R_+}$ is jointly continuous if and only if
$\{S_t\}_{t\in\R_+}$ is jointly continuous and the map $t\mapsto
w_t$ is continuous.
\end{lemma}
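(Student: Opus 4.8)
The plan is to verify each of the three equivalences by unwinding the definitions, using the single structural fact that an affine map $x\mapsto w+Sx$ on a vector space determines $w$ and $S$ uniquely: $w$ is its value at $0$ and $Sx=(Tx)-(T0)$. Since $\{T_t\}$, $\{S_t\}$ and $t\mapsto w_t$ are all given with $T_tx=w_t+S_tx$, one has $w_t=T_t0$ and $S_tx=T_tx-w_t$ throughout, and this is what lets me separate the ``$w$-part'' from the ``$S$-part'' of any identity between affine maps.

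I would start with the purely algebraic (semigroup) equivalence. Evaluating the identity $T_0=I$ at $x=0$ gives $w_0=0$, and then $T_0=I$ becomes $S_0x=x$ for all $x$, i.e.\ $S_0=I$; conversely $w_0=0$ and $S_0=I$ give $T_0=I$. For fixed $s,t\in\R_+$ one computes $T_t(T_sx)=w_t+S_tw_s+S_tS_sx$; evaluating $T_{t+s}=T_tT_s$ at $x=0$ yields $w_{t+s}=w_t+S_tw_s$, and with that in hand the identity collapses to $S_{t+s}x=S_tS_sx$ for all $x$, i.e.\ $S_{t+s}=S_tS_s$. Reading these steps in reverse shows that $\{T_t\}_{t\in\R_+}$ is an affine semigroup precisely when $\{S_t\}_{t\in\R_+}$ is a linear semigroup and \eqref{afff} holds.

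For the strong-continuity clause, note $w_t=T_t0$, so strong continuity of $\{T_t\}_{t\in\R_+}$ immediately gives continuity of $t\mapsto w_t$; then $t\mapsto S_tx=T_tx-w_t$ is continuous for each $x$ because subtraction is continuous in a topological vector space, so $\{S_t\}_{t\in\R_+}$ is strongly continuous. The converse is the same computation read backwards, using that $t\mapsto T_tx=w_t+S_tx$ is then a sum of continuous maps. The joint-continuity clause runs identically: the map $(t,x)\mapsto w_t$ on $\R_+\times X$ is jointly continuous exactly when $t\mapsto w_t$ is continuous (it factors through the projection onto $\R_+$), so $(t,x)\mapsto S_tx=T_tx-w_t$ and $(t,x)\mapsto T_tx=w_t+S_tx$ are controlled by the continuity of the vector-space operations just as before.

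I do not expect any genuine obstacle here; this really is routine bookkeeping, as the paper already signals. The only two points that deserve a moment's attention are the uniqueness of the affine decomposition and the continuity of addition and subtraction in a topological vector space, which together is all that is needed to transport continuity back and forth among $\{T_t\}_{t\in\R_+}$, $\{S_t\}_{t\in\R_+}$ and $t\mapsto w_t$.
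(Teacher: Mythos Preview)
Your proposal is correct and is precisely the routine verification the paper has in mind; the paper itself offers no proof beyond declaring the lemma ``a matter of an easy routine verification.'' Your explicit separation of the affine decomposition via $w_t=T_t0$ and $S_tx=T_tx-w_t$, together with the continuity of the vector-space operations, is exactly how one would carry this out.
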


\begin{lemma}\label{fgfg} Let $X$ be a topological vector space
and $\{T_t\}_{t\in\R_+}$ be a universal strongly continuous affine
semigroup on $X$. Then $(I-T_t)(X)$ is dense in $X$ for every $t>0$.
\end{lemma}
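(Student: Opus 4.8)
\medskip

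\noindent\textbf{Proof proposal.}

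The plan is to reduce the claim to a density statement about the linear part of the semigroup, then quotient out an offending closed subspace so that the time-$t_0$ map becomes a pure translation, and finally show that the resulting orbit cannot be dense unless the quotient is trivial. First I would write $T_tx=w_t+S_tx$ with $w_t=T_t0$ and $S_tx=T_tx-T_t0$; by Lemma~\ref{uaff0}, $\{S_t\}_{t\in\R_+}$ is a strongly continuous linear semigroup with $w_0=0$ and $w_{t+s}=w_t+S_tw_s$. Since $(I-T_{t_0})x=(I-S_{t_0})x-w_{t_0}$, the set $(I-T_{t_0})(X)$ is a translate of the linear subspace $E:=(I-S_{t_0})(X)$, so it suffices to prove that $E$ is dense. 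I fix $t_0>0$, assume for contradiction that $\overline{E}\neq X$, and let $\pi\colon X\to V:=X/\overline{E}$ be the quotient map onto the nonzero Hausdorff topological vector space $V$.

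Next I would pass to the induced dynamics on $V$. As $S_t$ commutes with $S_{t_0}$, and hence with $I-S_{t_0}$, we get $S_t(E)\subseteq E$ and, by continuity of $S_t$, also $S_t(\overline{E})\subseteq\overline{E}$. Since $T_tz_1-T_tz_2=S_t(z_1-z_2)$, the map $T_t$ descends to a well-defined affine map $\hat T_t$ on $V$ with $\hat T_t\circ\pi=\pi\circ T_t$; the family $\{\hat T_t\}_{t\in\R_+}$ is again an affine semigroup, and $s\mapsto\hat T_s\pi(x)=\pi(T_sx)$ is continuous. The decisive observation is that $z-S_{t_0}z=(I-S_{t_0})z\in E\subseteq\overline{E}$, so $\hat T_{t_0}\pi(z)=\pi(w_{t_0})+\pi(S_{t_0}z)=\pi(w_{t_0})+\pi(z)$; that is, $\hat T_{t_0}$ is translation by $c:=\pi(w_{t_0})$, and therefore $\hat T_{nt_0}$ is translation by $nc$ for every $n\in\Z_+$.

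Finally, let $x$ be a universal vector for $\{T_t\}_{t\in\R_+}$, so that $\{\pi(T_tx):t\geq0\}$ is dense in $V$. Writing $t=s+nt_0$ with $n=\lfloor t/t_0\rfloor\in\Z_+$ and $s\in[0,t_0]$ gives $\pi(T_tx)=\hat T_s\pi(x)+nc$, so this dense set is contained in $\bigcup_{n\in\Z_+}(\hat K+nc)$, where $\hat K:=\{\hat T_s\pi(x):s\in[0,t_0]\}$ is compact. I would then rule this out in two cases. If $c=0$, the union equals the compact, hence closed, set $\hat K$, forcing $V=\hat K$ to be a nonzero compact Hausdorff topological vector space; this is impossible, since for any $v\neq0$ the line $\R v$ is closed and homeomorphic to $\R$, so $\Z v$ would be an infinite closed discrete subset of the compact space $V$. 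If $c\neq0$, the line $\R c$ is a closed one-dimensional real subspace; passing to the quotient $q\colon V\to V/\R c$ collapses all the translates, $q\bigl(\bigcup_n(\hat K+nc)\bigr)=q(\hat K)$, so $q(\hat K)$ is dense and compact in $V/\R c$, whence $V/\R c=\{0\}$ and $V=\R c$ is homeomorphic to $\R$; but then $\hat K$ is bounded and $\bigcup_n(\hat K+nc)$ is contained in a half-line, contradicting its density. Either way we reach a contradiction, so $\overline{E}=X$ and $(I-T_{t_0})(X)$ is dense.

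The step I expect to be the main obstacle is this last one: once $\hat T_{t_0}$ has been identified as a translation, one still has to show that a \emph{continuous-time} orbit --- which is only a countable union of translates $\hat K+nc$ of a single compact set --- cannot be dense in a nonzero Hausdorff topological vector space, and the quotient-by-$\R c$ reduction to the one-dimensional situation is what makes this work. The remaining ingredients --- the affine decomposition, the $S_t$-invariance of $\overline{E}$, the well-definedness of the quotient semigroup, and the identification of $\hat T_{t_0}$ --- are routine.
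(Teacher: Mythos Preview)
Your proof is correct and follows essentially the same route as the paper: pass to the quotient by the closure of $(I-S_{t_0})(X)$ so that the time-$t_0$ map becomes a pure translation by some $c$, then observe that the image of the orbit lies in $\hat K+\Z_+c$ for a compact set $\hat K$ and derive a contradiction with density. The only cosmetic difference is in the endgame: the paper notes that $\hat K+\Z_+c$ is closed (compact plus closed), hence equals the whole quotient, yet misses $-rc$ for large $r$, whereas you split into the cases $c=0$ and $c\neq 0$ and in the latter first quotient by $\R c$ to force $V\cong\R$; both finishes are valid, and your case split in fact handles the degenerate case $c=0$ more explicitly than the paper does.
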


\begin{proof} Assume the contrary. Then there is $s>0$ such that
$Y_0\neq X$, where $Y_0=\overline{(I-T_s)(X)}$. Let $Y$ be a
translation of $Y_0$, containing $0$: $Y=Y_0-u_0$ with $u_0\in Y_0$.
it is easy to see that, factoring out the closed linear subspace
$Y$, we arrive to the universal strongly continuous affine semigroup
$\{F_t\}_{t\in\R_+}$ on $X/Y$, where $F_t(x+Y)=T_tx+Y$ for every
$t\in\R_+$ and $x\in X$. By definition of $Y$, the linear part of
$F_s$ is $I$. Let $\alpha\in X/Y$ be a universal vector for
$\{F_t\}_{t\in\R_+}$. By Lemma~\ref{uaff0}, there is a strongly
continuous linear semigroup $\{G_t\}_{t\in\R_+}$ on $X/Y$ and a
continuous map $t\mapsto \gamma_t$ from $\R_+$ to $X/Y$ such that
$\gamma_0=0$, $F_t\beta=G_t\beta+\gamma_t$ and
$\gamma_{r+t}=\gamma_r+G_r\gamma_t=\gamma_t+G_t\gamma_r$ for every
$\beta\in X/Y$ and $r,t\in\R_+$. Using these relations and the
equality $G_s=I$, we obtain that
$F_{t+ns}\alpha=F_t\alpha+n\gamma_s$ for every $n\in\Z_+$ and
$t\in\R_+$. It follows that
$$
\{F_t\alpha:t\in\R_+\}=K+\Z_+\gamma_s,\ \ \ \text{where
$K=\{F_t\alpha:t\in[0,s]\}$.}
$$
Since $\alpha$ is universal for $\{F_t\}_{t\in\R_+}$, by the last
display, $O=K+\Z_+\gamma_s$ is dense in $X/Y$. Since $O$ is closed
as a sum of a compact set and a closed set, $O=X/Y$. On the other
hand, $O$ does not contain $-c\gamma_s$ for any sufficiently large
$c>0$. This contradiction completes the proof.
\end{proof}

\begin{lemma}\label{trtrt} Let $X$ be a topological vector space, $x\in
X$, $s>0$ and $\{T_t\}_{t\in\R_+}$ be a universal affine semigroup
on $X$. Assume also that $T_tx=S_tx+w_t$, where $\{S_t\}_{t\in\R_+}$
strongly continuous linear semigroup on $X$ and $t\mapsto w_t$ is a
continuous map from $\R_+$ to $X$. Then $\{S_t\}_{t\in\R_+}$ is
hypercyclic. Moreover, $\uu(\{S_t\}_{t\in\R_+})\cap
(w_s+(I-S_s)(X))\neq\varnothing$ for every $s>0$.
\end{lemma}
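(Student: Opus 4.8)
The plan is to produce a \emph{single} vector $\eta\in X$ which both lies in $w_s+(I-S_s)(X)$ and is a universal vector for $\{S_t\}_{t\in\R_+}$; this yields the whole statement at once, since for a linear semigroup possessing a universal vector is exactly hypercyclicity. I would fix a universal vector $\alpha$ for $\{T_t\}_{t\in\R_+}$, so that $O:=\{T_t\alpha:t\in\R_+\}$ is dense in $X$, fix $s>0$, and put
\[
\eta:=T_s\alpha-\alpha .
\]
Throughout I write $T_ty=w_t+S_ty$ for the affine maps $T_t$, valid for all $y\in X$.

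The inclusion $\eta\in w_s+(I-S_s)(X)$ is immediate: from $T_s\alpha=w_s+S_s\alpha$ one gets $\eta-w_s=S_s\alpha-\alpha=(I-S_s)(-\alpha)$.

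For the universality of $\eta$ I would first record the identity, valid for every $t\in\R_+$,
\[
S_t\eta=S_t(T_s\alpha)-S_t\alpha=\bigl(T_{s+t}\alpha-w_t\bigr)-\bigl(T_t\alpha-w_t\bigr)=T_{s+t}\alpha-T_t\alpha=\psi_s(T_t\alpha),
\]
where I used the semigroup law $T_{s+t}=T_tT_s$ together with $T_ty=w_t+S_ty$, and where $\psi_s\colon X\to X$ is the continuous (affine) map $\psi_s(y)=T_sy-y$. Hence $\{S_t\eta:t\in\R_+\}=\psi_s(O)$. Since $O$ is dense and $\psi_s$ is continuous, $\psi_s(X)=\psi_s(\overline O)\subseteq\overline{\psi_s(O)}$. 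Now $\psi_s(X)=-(I-T_s)(X)$, and by Lemma~\ref{fgfg} the set $(I-T_s)(X)$ is dense in $X$; to invoke that lemma I first note, via Lemma~\ref{uaff0} applied to the decomposition $T_t=w_t+S_t$ (with $\{S_t\}_{t\in\R_+}$ strongly continuous and $t\mapsto w_t$ continuous), that $\{T_t\}_{t\in\R_+}$ is a strongly continuous affine semigroup. Therefore $\psi_s(X)$ is dense, so the closed set $\overline{\psi_s(O)}$ contains a dense set and hence equals $X$. Thus $\{S_t\eta:t\in\R_+\}$ is dense, i.e.\ $\eta\in\uu(\{S_t\}_{t\in\R_+})$, and in particular $\{S_t\}_{t\in\R_+}$ is hypercyclic.

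The essential content sits entirely in Lemma~\ref{fgfg}; after that the argument is a short manipulation whose only real idea is the choice $\eta=T_s\alpha-\alpha$: subtracting $\alpha$ converts the affine orbit $O$ of $\alpha$ into the literal $\psi_s$-image of $O$, which makes the transfer of density automatic. The only spots needing care are the elementary bookkeeping $\psi_s(X)=-(I-T_s)(X)$ together with the passage ``$(I-T_s)(X)$ dense $\Rightarrow$ $\psi_s(X)$ dense'', and the standard fact that a continuous image of a dense set is dense in the image of the whole space.
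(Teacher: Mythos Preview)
Your argument is correct and is essentially the paper's own proof. The paper fixes a universal vector $x$, uses Lemma~\ref{fgfg} to get $(T_s-I)(X)$ dense, and then computes $(T_s-I)T_tx=S_t\bigl(w_s-(I-S_s)x\bigr)$; since $w_s-(I-S_s)x=T_sx-x$ is exactly your $\eta$, the two computations coincide, and the density step (``continuous image of a dense set is dense in the image of the whole space, which is itself dense'') is the same in both.
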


\begin{proof} Let $x\in \uu(\{T_t\}_{t\in\R_+})$ and fix $s>0$. By
Lemma~\ref{fgfg}, $(T_s-I)(X)$ is dense in $X$. Hence
$O=\{(T_s-I)T_tx:t\in\R_+\}$ is dense in $X$. Using the semigroup
property of $\{T_t\}_{t\in\R_+}$ and $\{S_t\}_{t\in\R_+}$ together
with (\ref{afff}), we get
$$
(T_s-I)T_tx=S_sS_tx+S_sw_t+w_s-S_tx-w_t=S_tS_sx+S_tw_s-
S_tx=S_t(w_s-(I-S_s)x)
$$
for every $t\in\R_+$. By the above display, $O$ is exactly the
$S_t$-orbit of $w_s-(I-S_s)x$. Since $O$ is dense in $X$,
$w_s-(I-S_s)x\in w_s+(I-S_s)(X)$ is a hypercyclic vector for
$\{S_t\}_{t\in\R_+}$ and therefore $\uu(\{S_t\}_{t\in\R_+})\cap
(w_s+(I-S_s)(X))\neq\varnothing$.
\end{proof}

\begin{lemma}\label{commu} Let $X$ be a topological vector space
and $\{T_t\}_{t\in\R_+}$ be an affine semigroup on $X$. Then for
every $t_1,\dots,t_n\in\R_+$ and every $z_1,\dots,z_n\in\K$
satisfying $z_1+{\dots}+z_n=1$, the map
$S=z_1T_{t_1}+{\dots}+z_nT_{t_n}$ commutes with every $T_t$.
\end{lemma}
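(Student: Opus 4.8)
The plan is to reduce the assertion to two identities --- one for the linear parts of the maps involved, one for their translation vectors --- and to verify those identities by direct substitution. By definition of an affine semigroup we may write $T_tx=w_t+S_tx$, where the linear parts $\{S_t\}_{t\in\R_+}$ form a linear semigroup and, by Lemma~\ref{uaff0}, $w_0=0$ together with $w_{t+s}=w_t+S_tw_s$ for all $s,t\in\R_+$. Fix $t_1,\dots,t_n\in\R_+$ and $z_1,\dots,z_n\in\K$ with $\sum_{j=1}^n z_j=1$ and set $S=\sum_j z_jT_{t_j}$. First I would record that $Sx=v+Lx$ for every $x\in X$, where $v=\sum_j z_jw_{t_j}\in X$ and $L=\sum_j z_jS_{t_j}\in L(X)$; in particular $S$ is an affine map.

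Next, I would fix $t\in\R_+$ and expand both $ST_tx$ and $T_tSx$ using the formulas $T_tx=w_t+S_tx$ and $Sx=v+Lx$. Matching the linear parts of the two expressions amounts to $LS_t=S_tL$, which is immediate from the semigroup property of $\{S_t\}_{t\in\R_+}$: $S_{t_j}S_t=S_{t_j+t}=S_{t+t_j}=S_tS_{t_j}$ for each $j$ (this step does not use the $z_j$ at all). Matching the constant terms reduces the whole lemma to the single vector identity $v+Lw_t=w_t+S_tv$.

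The remaining identity is the one place where the hypothesis $z_1+\dots+z_n=1$ is needed, and it is the only point requiring computation. Using $(\ref{afff})$ with $(a,b)=(t_j,t)$ and with $(a,b)=(t,t_j)$, one rewrites $S_{t_j}w_t=w_{t_j+t}-w_{t_j}$ and $S_tw_{t_j}=w_{t+t_j}-w_t$; substituting these and using $t_j+t=t+t_j$, both sides of $v+Lw_t=w_t+S_tv$ collapse to $\sum_j z_jw_{t+t_j}$ --- on the right-hand side this collapse is exactly where $\sum_j z_j=1$ enters, to cancel the spurious term $(\sum_j z_j-1)w_t$. This proves $ST_t=T_tS$, and since $t\in\R_+$ was arbitrary, the lemma follows.

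I do not expect a genuine obstacle here; the only things to watch are the bookkeeping of $t_j+t$ versus $t+t_j$ (harmless, as $\R_+$ is abelian) and pinpointing where $\sum z_j=1$ is used. If one prefers a more conceptual route, one can linearise: send $T_t$ to the operator $\widehat T_t(x,\lambda)=(S_tx+\lambda w_t,\lambda)$ on $X\times\K$, check that $\{\widehat T_t\}_{t\in\R_+}$ is a commutative linear semigroup, observe that the condition $\sum z_j=1$ makes $\sum_j z_j\widehat T_{t_j}$ restrict to $S$ on the $\widehat T_t$-invariant affine hyperplane $X\times\{1\}$, and conclude from the fact that a linear combination of operators commuting with $\widehat T_t$ again commutes with $\widehat T_t$.
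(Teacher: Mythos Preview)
Your proof is correct, but it takes a more computational route than the paper's. The paper exploits a single structural fact: an affine map $A$ preserves affine combinations, i.e.\ $A(z_1x_1+\dots+z_nx_n)=z_1Ax_1+\dots+z_nAx_n$ whenever $z_1+\dots+z_n=1$. Applying this with $A=T_t$ and $x_j=T_{t_j}x$ gives $T_tS=z_1T_tT_{t_1}+\dots+z_nT_tT_{t_n}$ in one stroke, and then commutativity of the $T_\tau$ among themselves (from the abelian semigroup $\R_+$) finishes the argument. Your approach instead unpacks $T_t=w_t+S_t$ and verifies the linear and constant parts of $ST_t=T_tS$ separately via the cocycle identity~(\ref{afff}); this is perfectly valid and makes transparent exactly where $\sum z_j=1$ enters, but it trades the one-line affine-combination observation for a short computation. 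Your alternative linearisation on $X\times\K$ is in the same spirit as the paper's argument, since the affine-combination identity is precisely what one recovers by restricting linear maps of $X\times\K$ to the hyperplane $\lambda=1$.
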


\begin{proof}It is easy to verify that for every affine map $A:X\to
X$ and every $x_1,\dots,x_n\in X$,
\begin{equation*}
A(z_1x_1+{\dots}+z_nx_n)=z_1Ax_1+{\dots}+z_nAx_n\ \ \text{provided
$z_j\in\K$ and $z_1+{\dots}+z_n=1$.}
\end{equation*}
Let $t\in\R_+$. By the above display,
$T_tS=z_1T_tT_{t_1}+{\dots}+z_nT_tT_{t_n}$. Since $T_\tau$ commute
with each other, we get
$T_tS=z_1T_{t_1}T_t+{\dots}+z_nT_{t_n}T_t=ST_t$.
\end{proof}

\begin{lemma}\label{manyuni} Let $X$ be a topological vector space,
$\{T_t\}_{t\in\R_+}$ be a universal strongly continuous affine
semigroup on $X$ and $x\in\uu(\{T_t\}_{t\in\R_+})$. Then
$\Lambda(x)\subseteq \uu(\{T_t\}_{t\in\R_+})$, where
\begin{equation}\label{LAM}
\Lambda(x)=\{z_1T_{t_1}x+{\dots}+z_nT_{t_n}x:n\in\N,\ t_j\in\R_+,\
z_j\in\K,\ z_1+{\dots}+z_n=1\}.
\end{equation}
\end{lemma}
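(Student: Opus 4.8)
The plan is to fix $x\in\uu(\{T_t\}_{t\in\R_+})$ and show that every $y$ of the form occurring in \eqref{LAM} is again universal for the semigroup. Write $y=Sx$ with $S=z_1T_{t_1}+\dots+z_nT_{t_n}$; since $z_1+\dots+z_n=1$ this $S$ is a continuous affine map, and by Lemma~\ref{commu} it commutes with every $T_t$. Hence $T_ty=T_tSx=ST_tx$ for all $t$, so the orbit $\{T_ty:t\in\R_+\}$ equals $S(\{T_tx:t\in\R_+\})$, and since $\{T_tx:t\in\R_+\}$ is dense and $S$ is continuous, $\overline{\{T_ty:t\in\R_+\}}=\overline{S(X)}$. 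Thus $y\in\uu(\{T_t\}_{t\in\R_+})$ if and only if $S$ has dense range; and once $S$ has dense range this is immediate, because then $S^{-1}(U)$ is open and non-empty for every non-empty open $U\subseteq X$, so it meets the dense orbit of $x$, giving $t$ with $T_tx\in S^{-1}(U)$ and hence $T_ty=ST_tx\in U$. So it suffices to prove

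$(\ast)$ \emph{every affine combination $S=\sum_jz_jT_{t_j}$ with $\sum_jz_j=1$ of the operators of a universal strongly continuous affine semigroup has dense range.}

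To prove $(\ast)$ I would pass to linear parts. By Lemma~\ref{uaff0} there are a strongly continuous linear semigroup $\{S_t\}_{t\in\R_+}$ and a continuous cocycle $t\mapsto w_t$ with $T_t=w_t+S_t$; since $\sum_jz_j=1$ the linear part of $S$ is $L=\sum_jz_jS_{t_j}$ and $S(X)=L(X)+\sum_jz_jw_{t_j}$, so $S$ has dense range iff $L$ does. By Lemma~\ref{trtrt}, $\{S_t\}_{t\in\R_+}$ is hypercyclic. Now $L$ commutes with every $S_t$, so if $\overline{L(X)}\neq X$ then $Z:=\overline{L(X)}$ is a proper closed $S_t$-invariant subspace and the induced semigroup $\{\overline{S_t}\}_{t\in\R_+}$ on the non-zero space $X/Z$ is again strongly continuous and hypercyclic, now satisfying $\sum_jz_j\overline{S_{t_j}}=0$. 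So $(\ast)$ reduces to the statement $(\dagger)$: \emph{a non-zero topological vector space carries no strongly continuous hypercyclic linear semigroup $\{U_t\}_{t\in\R_+}$ with $\sum_{j=1}^n z_jU_{t_j}=0$ for some $z_1,\dots,z_n$ with $z_1+\dots+z_n=1$.}

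For $(\dagger)$ I would argue by contradiction with a relation $\sum_{j=1}^m a_jU_{r_j}=0$, $a_j\neq0$, $0\le r_1<\dots<r_m$, chosen with $m$ as small as possible. If $m=1$ then $U_{r_1}=0$: for $r_1=0$ this kills the space, and for $r_1>0$ every orbit $\{U_t\xi:t\in\R_+\}$ lies in the compact set $\{U_t\xi:t\in[0,r_1]\}\cup\{0\}$, which cannot be dense in a non-zero space, contradicting hypercyclicity. If $m\ge2$, factoring out the lowest time gives $U_{r_1}\bigl(a_1I+\sum_{j\ge2}a_jU_{r_j-r_1}\bigr)=0$; denoting by $B$ the second factor, if $r_1=0$ we are already in the case below, if $B=0$ the relation is replaced by one of the same length with smallest time $0$, and if $r_1>0$ and $B\neq0$ then $\overline{\ker U_{r_1}}$ is a proper non-zero closed $U_t$-invariant subspace, modulo which one again gets a relation with smallest time $0$. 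So we may assume $I=\sum_{j=2}^m b_jU_{r_j}$ with $b_j\neq0$, $0<r_2<\dots<r_m$. Factoring $U_{r_2}$ out of the right-hand side shows $U_{r_2}$ is invertible with inverse $b_2I+\sum_{j\ge3}b_jU_{r_j-r_2}$; in particular the semigroup extends to a strongly continuous group $\{U_t\}_{t\in\R}$. For $m=2$ this gives $U_{r_2}=b_2^{-1}I$, contradicting Lemma~\ref{iden} on $X/Z$ (which is infinite-dimensional by Lemma~\ref{fd}).

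The main obstacle is the case $m\ge3$. Here the relation $I=\sum_jb_jU_{r_j}$ with all $r_j>0$, together with the group property, forces $U_s\in\spann\{U_t:t\in[0,r_m]\}$ for every $s\in\R$ (repeatedly replace a term $U_\sigma$ with $\sigma\notin[0,r_m]$ by a combination of $U$ at strictly closer-to-$[0,r_m]$ times), so $\{U_t\xi:t\in\R_+\}\subseteq\spann\{U_t\xi:t\in[0,r_m]\}$, and the span of this compact set is dense. One then has to squeeze from the invertibility of $U_{r_2}$ and this relation either a genuinely shorter relation — contradicting minimality of $m$ — or a relation $U_{t_0}=\lambda I$ with $t_0>0$, to conclude once more by Lemma~\ref{iden}; this elimination step is the only delicate point, the rest being the routine reductions above. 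Once $(\dagger)$ is established, $(\ast)$ and hence $\Lambda(x)\subseteq\uu(\{T_t\}_{t\in\R_+})$ follow.
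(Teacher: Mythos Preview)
Your overall strategy coincides with the paper's line for line: write $y=Sx$ with $S=\sum z_jT_{t_j}$ ($\sum z_j=1$), use Lemma~\ref{commu} to get $ST_t=T_tS$, observe that $y\in\uu(\{T_t\})$ as soon as $S(X)$ is dense, decompose via Lemma~\ref{uaff0} so that the linear part of $S$ is $L=\sum z_jS_{t_j}$ (and $S(X)$ is a translate of $L(X)$), and invoke Lemma~\ref{trtrt} to conclude that $\{S_t\}$ is hypercyclic. The only remaining ingredient is that $L$ has dense range.

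At this point the paper simply quotes Conejero, M\"uller and Peris \cite{semi}: \emph{every nontrivial linear combination of members of a hypercyclic strongly continuous linear semigroup has dense range}. Since $\sum z_j=1\neq0$, this applies and the proof ends in one line. You instead attempt to reprove this fact through your statements $(\ast)$ and $(\dagger)$, and this is where the gap lies. You yourself flag the $m\ge3$ case as ``the only delicate point'' and do not carry it out; the sketch (``squeeze \dots\ either a genuinely shorter relation \dots\ or $U_{t_0}=\lambda I$'') is not a proof. There is also an imprecision in the $m\ge2$ reduction: passing to the quotient by $\ker U_{r_1}$ does not produce ``a relation with smallest time $0$'', since the induced $\overline{U_{r_1}}$ need not be injective on $X/\ker U_{r_1}$; the natural fix is to quotient by $\overline{B(X)}$ instead (proper because $U_{r_1}B=0$ while a length-one relation $U_{r_1}=0$ is excluded by minimality of $m$), but you should say so. Finally, Lemma~\ref{fd} concerns \emph{complex} spaces and \emph{supercyclic} semigroups; to rule out finite-dimensional quotients for a hypercyclic semigroup over $\K$ you want the standard fact (e.g.\ \cite{ww}) that finite-dimensional spaces carry no hypercyclic operators, hence no hypercyclic semigroups.

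In short: your reduction to ``$L$ has dense range'' is exactly the paper's proof, but that dense-range statement is a cited theorem from \cite{semi}, not something to be redone here. Either cite \cite{semi} as the paper does, or supply a complete argument for $(\dagger)$; as written, the proposal has a genuine hole.
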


\begin{proof} Let $n\in\N$, $t_1,\dots,t_n\in\R_+$,
$z_1,\dots,z_n\in\K$ and $z_1+{\dots}+z_n=1$. We have to show that
$Ax\in\uu(\{T_t\}_{t\in\R_+})$, where
$A=z_1T_{t_1}+{\dots}+z_nT_{t_n}$. By Lemma~\ref{commu}, $A$
commutes with each $T_t$. Since $x\in\uu(\{T_t\}_{t\in\R_+})$, it
suffices to verify that $A(X)$ is dense in $X$. By
Lemma~\ref{uaff0}, we can write $T_ty=S_ty+w_t$ for every $y\in X$,
where $\{S_t\}_{t\in\R_+}$ is a strongly continuous linear semigroup
on $X$ and $t\mapsto w_t$ is a continuous map from $\R_+$ to $X$. By
Lemma~\ref{trtrt}, $\{S_t\}_{t\in\R_+}$ is hypercyclic. As shown in
\cite{semi}, every non-trivial linear combination of members of a
hypercyclic strongly continuous linear semigroup has dense range.
Thus $B=z_1S_{t_1}+{\dots}+z_nS_{t_n}$ has dense range. Since $A(X)$
is a translation of $B(X)$, $A(X)$ is also dense in $X$, which
completes the proof.
\end{proof}

\begin{proof}[Proof of Theorem~$\ref{BBB}$] Let $X$ be a topological
vector space and $\{T_t\}_{t\in\R_+}$ be a universal jointly
continuous affine semigroup on $X$. Lemmas~\ref{uaff0}
and~\ref{trtrt} provide a hypercyclic jointly continuous linear
semigroup on $X$. By Theorem~A, there is a hypercyclic continuous
linear operator on $X$. Since no such thing exists on a finite
dimensional topological vector space \cite{ww}, $X$ is infinite
dimensional. Since any compact subspace of an infinite dimensional
topological vector space is nowhere dense \cite{shifer}, condition
(1) of Proposition~\ref{gege} is satisfied. Now let
$x\in\uu(\{T_t\}_{t\in\R_+})$. By Lemma~\ref{manyuni}, the set
$\Lambda(x)$ defined in (\ref{LAM}) consists entirely of universal
vectors for $\{T_t\}_{t\in\R_+}$. Clearly,
$\{T_tx:t\in\R_+\}\subseteq\Lambda(x)$. By its definition,
$\Lambda(x)$ is an affine subspace (=a translation of a linear
subspace) of $X$. Since every affine subspace of a topological
vector space is connected, locally path connected and simply
connected, $\Lambda(x)$ satisfies all requirements for the set
$Y_{c,x}$ (for every $c>0$) from condition (2) in
Proposition~\ref{gege}. By Proposition~\ref{gege},
$\uu(T_s)=\uu(\{T_t\}_{t\in\R_+})$ for every $s>0$, as required.
\end{proof}

\section{Proof of Theorem~\ref{main}}

Let $X$ be a complex topological vector space and
$\{T_t\}_{t\in\R_+}$ be a supercyclic jointly continuous linear
semigroup on $X$. We have to prove that each $T_s$ with $s>0$ is
supercyclic and the set of supercyclic vectors of $T_s$ coincides
with the set of supercyclic vectors of $\{T_t\}_{t\in\R_+}$. If
$T_t-\lambda I$ has dense range for every $t>0$ and every
$\lambda\in\C$, then Proposition~C provides the required result.
Otherwise, by Proposition~\ref{dich}, there is a closed hyperplane
$H$ in $X$ invariant for every $T_t$. By Lemma~\ref{dich1}, there
are $f\in X'$ and $\alpha\in\C$ such that $H=\ker f$ and $e^{\alpha
t}T'_tf=f$ for every $t\in\R_+$. Clearly $\{e^{\alpha
t}T_t\}_{t\in\R_+}$ is a jointly continuous supercyclic linear
semigroup on $X$ with the same set ${\cal S}$ of supercyclic vectors
as the original semigroup $\{T_t\}_{t\in\R_+}$. Fix $u\in X$
satisfying $f(u)=1$. Now fix $s>0$ and $v\in{\cal S}$. We have to
show that $v$ is supercyclic for $T_s$. By Lemma~\ref{trar1},
applied to the semigroup $\{e^{\alpha t}T_t\}_{t\in\R_+}$, we can
write $v=\lambda(u+y)$, where $\lambda\in\K\setminus\{0\}$ and $y$
is a universal vector for the jointly continuous affine semigroup
$\{R_t\}_{t\in\R_+}$ on $H$ defined by the formula
$R_tx=w_t+e^{\alpha t}T_tx$ with $w_t=(e^{\alpha t}T_t-I)u$. By
Theorem~\ref{BBB}, $y$ is universal for $R_s$. By Lemma~\ref{trar},
$v=\lambda(u+y)$ is a supercyclic vector for $e^{\alpha s}T_s$ and
therefore $v$ is a supercyclic vector for $T_s$. The proof is
complete.

\section{Remarks}

By Lemma~\ref{trtrt}, universality of a strongly continuous affine
semigroup implies hypercyclicity of the underlying linear semigroup.
The following example shows that that the converse is not true.

\begin{example}\label{fiii} Consider the backward weighted
shift $T\in L(\ell_2)$ with the weight sequence
$\{e^{-2n}\}_{n\in\N}$. That is, $Te_0=0$ and $Te_n=e^{-2n}e_{n-1}$
for $n\in\N$, where $\{e_n\}_{n\in\Z_+}$ is the standard basis of
$\ell_2$. Then the jointly continuous linear semigroup
$\{S_t\}_{t\in\R_+}$ with $S_t=e^{t\ln(I+T)}$ is hypercyclic.
Moreover, there exists a continuous map $t\mapsto w_t$ from $\R_+$
to $\ell_2$ such that $\{T_t\}_{t\in\R_+}$ is a jointly continuous
non-universal affine semigroup, where $T_tx=w_t+S_tx$ for
$x\in\ell_2$.
\end{example}

\begin{proof} Since $T$, being a compact weighted backward shift, is
quasinilpotent, the operator $\ln(I+T)$ is well defined and bounded
and $\{S_t\}_{t\in\R_+}$ is a jointly continuous linear semigroup.
Moreover, $S_1=I+T$ is hypercyclic according to Salas \cite{salas}
as a sum of the identity operator and a backward weighted shift.
Hence  $\{S_t\}_{t\in\R_+}$ is hypercyclic.

Let $u\in \ell_2$, $u_n=(n+1)^{-1}$ for $n\in\Z_+$. For each
$t\in\R_+$, let $w_t=\nu_t(T)u$, where
$\nu_s(z)=\sum\limits_{n=1}^\infty
\frac{s(s-1)\dots(s-n+1)}{n!}z^{n-1}$. Since $T$ is quasinilpotent,
$\nu_t(T)$ are well defined bounded linear operators and the map
$t\mapsto \nu_t(T)$ is operator-norm continuous. Hence $t\mapsto
w_t$ is continuous as a map from $\R_+$ to $\ell_2$. It is easy to
verify that $w_0=0$, $w_1=u$ and $w_{t+s}=S_tw_s+w_t$ for every
$s,t\geq 0$. By Lemma~\ref{uaff0}, $\{T_t\}_{t\in\R_+}$ is a jointly
continuous affine semigroup, where $T_tx=w_t+S_tx$. It remains to
show that $\{T_t\}_{t\in\R_+}$ is non-universal. Assume the
contrary. Since $w_1=u$ and $S_1=I+T$, Lemma~\ref{trtrt} implies
that the coset $u+T(\ell_2)$ must contain a hypercyclic vector for
$I+T$. This however is not the case as shown in
\cite[Proposition~7.16]{66}.
\end{proof}

Recall that a topological space $X$ is called a {\it Baire space} if
the intersection of any countable collection of dense open subsets
of $X$ is dense in $X$.

\begin{remark}\label{re1} Let $X$ be a topological vector space and $S\in
L(X)$ be hypercyclic. If $u\in (I-S)(X)$, then the affine map
$Tx=u+Sx$ is universal. Indeed, let $w\in X$ be such that $u=w-Sw$.
It is easy to show that $T^nx=w+S^n(x-w)$ for every $x\in X$ and
$n\in\N$. Thus $x$ is universal for $T$ if and only if $x-w$ is
universal for $S$.

If additionally $X$ is separable metrizable and Baire, then a
standard Baire category type argument shows that the set of $u\in X$
for which the affine map $Tx=u+Sx$ is universal is a dense
$G_\delta$-subset of $X$. Example~\ref{fiii} shows that this set can
differ from $X$.
\end{remark}

Recall that a locally convex topological vector space $X$ is called
{\it barrelled} if every closed convex balanced subset $B$ of $X$
satisfying $X=\bigcup\limits_{n=1}^\infty nB$ contains a
neighborhood of 0. As we have already mentioned in the introduction,
the joint continuity of a linear semigroup follows from the strong
continuity if the underlying space $X$ is an $\F$-space. The same is
true for wider classes of topological vector spaces. For instance,
it is sufficient for $X$ to be a Baire topological vector space or a
barrelled locally convex topological vector space \cite{shifer}.
Thus the following observation holds true.

\begin{remark}\label{j-s}
The joint continuity condition in Theorems~A, \ref{main}
and~\ref{BBB} can be replaced by the strong continuity, provided $X$
is Baire or $X$ is locally convex and barrelled.
\end{remark}

For general topological vector spaces however strong continuity of a
linear semigroup does not imply joint continuity. Moreover, the
following example shows that Theorem~A fails in general if the joint
continuity condition is replaced by the strong continuity. Recall
that the Fr\'echet space $L^2_{\rm loc}(\R_+)$ consists of the
(equivalence classes of) scalar valued functions $\R_+$, square
integrable on $[0,c]$ for each $c>0$. Its dual space can be
naturally interpreted as the space $L^2_{\rm fin}(\R_+)$ of
(equivalence classes of) square integrable scalar valued functions
$\R_+$ with bounded support. The duality between $L^2_{\rm
loc}(\R_+)$ and $L^2_{\rm fin}(\R_+)$ is provided by the natural
dual pairing $\langle f,g\rangle =\int_0^\infty f(t)g(t)\,dt$.
Obviously the linear semigroup $\{S_t\}_{t\in\R_+}$ of backward
shifts $S_tf(x)=f(x+t)$ is strongly continuous and therefore jointly
continuous on the Fr\'echet space $L^2_{\rm loc}(\R_+)$. It follows
that the same semigroup is strongly continuous on $L^2_{\sigma,\rm
loc}(\R_+)$ being $L^2_{\rm loc}(\R_+)$ endowed with the weak
topology.

\begin{example}\label{AAA} Let $X=L^2_{\sigma,\rm
loc}(\R_+)$ and $\{S_t\}_{t\in\R_+}$ be the above strongly
continuous semigroup on $X$. Then there is $f\in X$ hypercyclic for
$\{S_t\}_{t\in\R_+}$ such that $f$ is non-hypercyclic for $S_1$.
\end{example}

\begin{proof} Let $H$ be the hyperplane in $L^2[0,1]$ consisting of
the functions with zero Lebesgue integral. Fix a norm-dense
countable subset $A$ of $H$. One can easily construct $f\in L^2_{\rm
loc}(\R_+)$ such that
\begin{itemize}
\item[(a)]for every $n\in\N$, the function $f_n:[0,1]\to\K$,
$f_n(t)=f(n+t)$ belongs to $A;$
\item[(b)]for every $n\in\N$ and $h_1,\dots,h_n\in A$, there is
$m\in\N$ such that $h_j=f_{m+j}$ for $1\leq j\leq n$.
\end{itemize}

For $s\in\R_+$, let $\chi_s\in X'=L^2_{\rm fin}(\R_+)$ be the
indicator function of the interval $[s,s+1]$: $\chi_s(t)=1$ if
$s\leq t\leq s+1$ and $\chi_s(t)=0$ otherwise.  By (a),
$S_1^nf\in\ker\chi_0$ for every $n\in\N$ and therefore $f$ is not a
hypercyclic vector for $S_1$.

It remains to show that $f$ is a hypercyclic vector for
$\{S_t\}_{\in\R_+}$ acting on $X$. Using (a) and (b), we see that
the Fr\'echet space topology closure of the orbit
$\{S_tf:t\in\R_+\}$ is exactly the set
$$
O=\bigcup_{s\in[0,1)}\bigcap_{n\in\Z_+}\ker\chi_{s+n}.
$$
In order to show that $f$ is hypercyclic for $\{S_t\}_{\in\R_+}$
acting on $X$, it suffices to verify that $O$ is dense in
$L^2_{\sigma,\rm loc}(\R_+)$. Assume the contrary. Then there is a
weakly open set $W$ in $L^2_{\rm loc}(\R_+)$, which does dot
intersect $O$. That is, there are linearly independent
$\phi_1,\dots,\phi_m\in L^2_{\rm fin}(\R_+)$ and
$c_1,\dots,c_m\in\K$ such that
$$
\text{$\max\limits_{1\leq j\leq m}|c_j-\langle g,\phi_j\rangle|\geq
1$ for every $g\in O$.}
$$

Let $k\in\N$ be such that each $\phi_j$ vanishes on $[k,\infty)$.
Pick any $0<t_0<{\dots}<t_{m}<1$. Note that for every
$l\in\{0,\dots,m\}$, the restrictions of the functionals $\phi_j$ to
$\bigcap\limits_{n=0}^k\ker\chi_{t_l+n}$ are not linearly
independent. Indeed, otherwise we can find
$h_0\in\bigcap\limits_{n=0}^k\ker\chi_{t_l+n}$ such that $\langle
h_0,\phi_j\rangle=c_j$ for $1\leq j\leq m$. It is easy to see that
there is $h\in L^2_{\rm loc}(\R_+)$ such that
$h\bigr|_{[0,k]}=h_0\bigr|_{[0,k]}$, $h\bigr|_{[k+1,\infty)}=0$ and
$\langle h,\chi_{t_l+k-1}\rangle=\langle h,\chi_{t_l+k}\rangle=0$.
Then $\langle h,\phi_j\rangle=c_j$ for $1\leq j\leq m$ and $h\in
\bigcap\limits_{n=0}^\infty\ker\chi_{t_l+n}\subseteq O$. We have
arrived to a contradiction with the above display.

The fact that $\phi_j$ are not linearly independent on
$\bigcap\limits_{n=0}^k\ker\chi_{t_l+n}$ implies that there is a
non-zero $g_l\in\spann\{\phi_1,\dots,\phi_m\}\cap
\spann\{\chi_{t_l},\dots,\chi_{t_l+k}\}$. Since $\chi_{t_l+r}$ are
all linearly independent, $g_0,\dots,g_m$ are $m+1$ linearly
independent vectors in the $m$-dimensional space
$\spann\{\phi_1,\dots,\phi_m\}$. This contradiction completes the
proof.
\end{proof}

%\bigskip {\bf Acknowledgements.} \
%\vfill\break

\small\rm

\vskip1truecm

\scshape

\noindent Stanislav Shkarin

\noindent Queens's University Belfast

\noindent Pure Mathematics Research Centre

\noindent University road, Belfast, BT7 1NN, UK

\noindent E-mail address: \qquad {\tt s.shkarin@qub.ac.uk}

\end{document}